\newcommand{\interior}{\mathrm{int}\,}
\newcommand{\reInt}{\mathrm{ri}\,}
\newcommand{\closure}{\mathrm{cl}\,}
\newcommand{\dirCone}{\mathrm{dir}\,}
\newcommand{\tanCone}{\mathrm{tanCone}\,}
\newcommand{\norm}[1]{\lVert{#1}\rVert}
\newcommand{\inProd}[2]{\langle #1 , #2 \rangle }
\title{A structural geometrical analysis of weakly infeasible SDPs  \\ \ }
\author{Bruno F. Louren\c{c}o
\thanks{
Department of Mathematical and Computing Sciences,
Tokyo Institute of Technology,
2-12-1-W8-41 Ookayama, Meguro-ku, Tokyo 152-8552, Japan. (E-mail: flourenco.b.aa@m.titech.ac.jp)}
        \and
        Masakazu Muramatsu\thanks{
                     Department of Computer Science, The University of Electro-Communications 1-5-1 Chofugaoka, Chofu-shi, Tokyo, 182-8585 Japan. (E-mail: muramatu@cs.uec.ac.jp)
                  }
         \and       
                Takashi Tsuchiya
\thanks{
National Graduate Institute for Policy Studies 7-22-1 Roppongi, Minato-ku, Tokyo 106-8677, Japan. (E-mail: tsuchiya@grips.ac.jp) \newline M. Muramatsu and T. Tsuchiya are supported in part with Grant-in-Aid for Scientific Research (B)24310112
                  }
        }
\date{November 2013 (Revised: July 2015) }
\newtheorem{definition}{Definition}
\newtheorem{proposition}[definition]{Proposition}
\newtheorem{example}[definition]{Example}
\newtheorem{corollary}[definition]{Corollary}
\newtheorem{theorem}[definition]{Theorem}
\begin{document}
\maketitle
\begin{abstract}
In this article, we present a geometric theoretical analysis of
semidefinite feasibility problems (SDFPs). This is done by
decomposing a SDFP into smaller problems, in a way
that preserves most feasibility properties of the original problem. 
With this technique, we develop a detailed analysis of weakly infeasible SDFPs
to understand clearly and systematically how weak
infeasibility arises in semidefinite programming. 
In particular, we show that for a weakly infeasible problem over $n\times n$ matrices, 
at most $n-1$ directions are required to approach the positive semidefinite cone.
We also present a discussion on feasibility certificates for SDFPs and related 
complexity results.
\end{abstract}

\renewcommand{\topfraction}{0.9}

\section{Introduction.}
In this paper, we deal with the following semidefinite feasibility problem
\begin{equation}
\max \; 0 \ \ \text{s.t.}\ \; x \in (L+c)\cap K_n, \label{sdpf}
\end{equation}
where $L \subseteq \mathbb{S}_n$ is a vector subspace and 
$c \in \mathbb{S}_n$. By $\mathbb{S}_n$ we denote the linear space 
of $n\times n$ real symmetric matrices and $K_{n} \subseteq \mathbb{S}_{n}$ denotes the cone 
of $n\times n$ positive semidefinite matrices. We denote the problem \eqref{sdpf} by $(K_n,L,c)$.

It is known that every instance of a semidefinite program 
falls into one of the following four statuses:
\begin{itemize}
	\item \emph{Strongly feasible}: $(L+c)\cap \interior (K_n) \neq \emptyset$, where 
	$\interior (K_n)$ denotes the interior of $K_n$.
	\item \emph{Weakly feasible}: $(L+c)\cap \interior (K_n) = \emptyset$, but $(L+c)\cap K_n \neq \emptyset$.
	\item \emph{Weakly infeasible}: $(L+c)\cap K_n = \emptyset$ and $\mathrm{dist}(K,L+c) = 0$.
	\item \emph{Strongly infeasible}: $(L+c)\cap K_n = \emptyset$ and $\mathrm{dist}(K,L+c) > 0$.
\end{itemize}
 
Among the four feasibility statuses, all but weak infeasibility afford simple finite certificates: 
an interior-feasible solution, a pair consisting of a feasible solution and a vector which is normal to a separating hyperplane, 
and a dual improving direction for strong feasibility, weak feasibility and strong infeasibility respectively.  The last one is sometimes called a Farkas-type certificate, and
plays an important role in optimization theory.  However,  it is not evident whether weak infeasibility affords such a finite certificate. 

By ``finite certificate'' we mean a finite sequence in some finite dimensional vector space.
In this paper, we focus on the structural analysis of weak infeasibility in semidefinite programming and 
develop a procedure which distinguishes the four statuses. 
We also obtain a finite certificate for weak infeasibility. 
But we emphasize that the main feature of our approach is concreteness in analyzing weak infeasibility.

In view of finite certificates, we  mention that
it is possible to obtain a finite and polynomially bounded certificate of weak infeasibility by using
Ramana's extended Lagrangian dual \cite{Ramana95anexact}.  This result is based on the fact that 

\medskip
{\it 
$(K_n,L,c)$ is weakly infeasible if and only if it is \emph{infeasible} and \emph{not strongly infeasible}
}

\medskip\noindent 
as we will discuss in Section 2. Ramana developed a generalized Farkas' Lemma 
for SDP which holds without any assumptions. Since \emph{infeasibility} and \emph{not strong infeasibility} 
have finite certificates, the same is true for weak infeasibility. 
As this argument has an existential flavour, it is not so clear the implications for 
the structure of the problem. 
In this paper, we study weak infeasibility in semidefinite programming  from a more constructive  
point of view to answer, for instance, the following basic question:

\medskip{\it
Given a weakly infeasible SDFP, how can we generate a sequence 
$\{u^{(i)}|\ u^{(i)}\in L + c, i = 1,\ldots \infty\}$ such that $\lim_{i\rightarrow\infty}{\rm dist}(u^{(i)}, K_n) =0$?
}

\medskip \noindent
Due to the fact that the distance between $K_n$ and $L+c$ is 
zero, we readily see that there exists a nonzero element $a$ in $K_n\cap L$.
However, it is not clear how $a$ is related to the weak infeasibility of $(K_n,L,c)$. 
Since the problem is infeasible, ${\rm dist}(ta + b, K_n)> 0$ for any $t > 0$ and $b\in L+c$.  
It would be natural to ask what to happen as $t$ goes to infinity. 
Can $\lim_{t\rightarrow \infty} {\rm dist}(ta + b, K_n)=\infty$ or a finite 
nonzero value, or zero? If we cannot find any $b\in L+c$ such that
$\lim_{t\rightarrow \infty} {\rm dist}(ta + b, K_n)= 0$ holds, how $a$ can be used to construct points 
close to the cone?  

We will show that $a$ alone is not enough to generate such a sequence, but
$(n-1)$ directions including $a$ are sufficient (with an appropriate choice
of $b$), whenever the problem is weakly infeasible.
In other words, if $(K_n, L, c)$ is weakly infeasible then there exists a $(n-1)$ dimensional affine subspace 
${\cal F} \subseteq L + c$ such that ${\cal F}\cap K_n = \emptyset$ but ${\rm dist}({\cal F}, K_n)=0$.
This result is a bit surprising to us, because, in general, 
if $K$ is a closed convex cone and $(K,L,c)$ is weakly infeasible, then the number of
directions necessary to approach the cone could be as large as the dimension of $L$, which
could be up to $ (\frac{n(n+1)}{2} - 1)$ in our context.

The proof is done by constructing a set of directions in $L$ which we call {\it hyper feasible partition}.
These direction are obtained recursively starting from a nonzero element in $K_n\cap L$.  
An important feature of this set is that, even though each direction is not necessarily positive (semi)definite,
we can always find a positive linear combination which is almost positive semidefinite
(the minimum eigenvalue can be made to be arbitrarily close to zero).  
The introduction of hyper feasible partitions is another main contribution of this paper
and they provide a new insight in the analysis of ill-conditioned semidefinite programs.

One possible application of our results is as follows. Consider the 
following SDP
\begin{equation*}
\max \; \inProd{b}{x} \ \ 
\text{s.t.} \; x \in (L+c)\cap K_n \tag{P}\label{sdp_primal}, 
\end{equation*}
and suppose that the optimal value $b^*$ is finite  but not attained. The set 
$\{x \in L+c \mid  \inProd{b}{x} = b^*\}$ is non-empty and is also an affine space.
Denoting by $\widetilde {L}$ the underlying vector space and letting $\widetilde 
c$ be any point which belongs to the affine space, we have that $(K_n,\widetilde{L}, \widetilde{c})$ is 
weakly infeasible.  Indeed such problems arise in many applications in semidefinite programming
including control theory and polynomial optimization \cite{waki_how_2012}.

The main tool we use is a simple decomposition result (Theorem \ref{theo_decomp}), 
which implies that 
some semidefinite feasibility problems (SDFPs) can be decomposed into smaller subproblems in a way that the feasibility 
properties are mostly preserved. We also discuss two procedures for analyzing feasibility 
problems, a \emph{forward procedure} (\textbf{FP}) and a \emph{backward procedure} (\textbf{BP}). In particular, 
\textbf{BP} can distinguish the 4 different feasibility statuses in a systematic manner.

We review related previous works.
The existence of weak infeasibility/feasibility and finite duality gap is one of the main 
difficulties in semidefinite programming.  These situations may occur in the absence of 
interior-feasible solutions to the primal and/or dual.  
Two possible techniques to recover interior-feasibility
by reducing the feasible region of the problem
or by expanding the feasible region of its dual 
counter-part are the facial reduction algorithm (FRA) and the conic expansion approach
(CEA), respectively.
FRA was developed by Borwein and Wolkowicz \cite{borwein_facial_1981} for problems 
more general than conic programming, whereas  CEA was developed by Luo, Sturm and Zhang \cite{Luo96dualityand} for conic programming.

In the earlier stages of research of semidefinite programming, 
Ramana \cite{Ramana95anexact} developed an extended Lagrange-Slater dual  (ELSD) that has 
no duality gap.  ELSD has the remarkable feature that the size of the extended 
problem is bounded by a polynomial in terms of the size of the original problem.
In \cite{ramana_strong_1997}, Ramana, Tun\c{c}el and Wolkowicz demonstrated that ELSD can be interpreted as 
a facial reduction problem, however, we should note that in the original FRA, the size 
of the problem is not polynomially bounded, see also \cite{pataki_strong_2013}. 
In \cite{polik07b}, Polik and Terlaky provided 
strong duals for conic programming over symmetric cones.
Recently, Klep and Schweighofer developed another dual
based on real algebraic geometry where the strong duality holds without any constraint 
qualification \cite{klep_exact_2013}.  Like ELSD, their dual is just represented in terms of the data of 
original problem and the size of the dual is bounded by a polynomial in terms of the size of the original problem.
Complexity of SDFP is yet a subtle issue.  This topic was studied extensively by
Porkolab and Khachiyan \cite{Porkolab_Khachiyan_97}. 

Waki and Muramatsu \cite{article_waki_muramatsu}  considered a FRA for conic programming and showed that 
FRA can be regarded as a dual version of CEA. See an excellent review by Pataki \cite{pataki_strong_2013}
for FRA, where he points out the relation between facial reduction and extended duals. 
Pataki also found  that all ill-conditioned semidefinite programs can be reduced 
to a common $2\times 2$ semidefinite program \cite{pataki_bad_sdps}.
Finally, we mention that Waki showed that weakly infeasible instances can be obtained
from semidefinite relaxation of polynomial optimization problems \cite{waki_how_2012}.

The problem of weak infeasibility is closely related to closedness of the image of $K_n$ by a certain linear map.
A comprehensive treatment of the subject was given by Pataki \cite{pataki_closedness_2007}.
We will discuss the connection between Pataki's results and weak infeasibility in Section 2.

This paper is organized as follows. In Section \ref{sec:certificates}, we 
discuss certificates for the different feasibility statuses and point the connections
to previous works. 
In Section \ref{sec:decomp} we present Theorem \ref{theo_decomp} and discuss 
how certain SDFPs can be broken in smaller problems. We also prove the bound 
$n-1$ for the number of the directions needed to approach $K_n$.
In Section \ref{sec:backward_procedure}, a procedure to distinguish between 
the $4$ different feasibility statues is given. Section \ref{sec:conc} summarizes 
this work.

\section{Characterization of different feasibility statuses}\label{sec:certificates}

In this section, we review the characterization of different feasibility statuses of semidefinite programs
with emphasis on weak infeasibility. 
\subsection{Certificates and \NP\ class in the Blum-Shub-Smale model}

Our main interest is on \emph{finite certificates} and computational complexity. 
The model of computation we use is 
the Blum-Shub-Smale model (BSS model) \cite{blum_complexity_1997} of real computation.
The main aspects are that we do not care about the bit length of 
a real number, we can evaluate any rational function over $\mathbb{R}$
and the machine can deviate the flow of execution by evaluating a linear inequality.
``Finite'' in this context means that the certificates are composed of a finite 
number of vectors contained in some  finite dimensional vector space. The length of 
the certificate is then the total number of coordinates among all the vectors it contains. 
It is also required 
that a verifier procedure exists. Such a procedure receives as input 
the problem and the certificate and attest that the certificate is indeed 
valid in a finite amount of time.  
If a decision problem admits a finite certificate with a verification procedure
such that the length of the former and time complexity of the latter are polynomials in terms of the size of the problem
then it is in \NP\ under the BSS model. The main decision problem we are interested in is: 
\emph{given $(K_n,L,c)$, what is its 
feasibility status?}\footnote{Strictly speaking, a decision problem should have 
``yes'' or ``no'' as answers, but in our case the possible answers are 
strong/weakly feasible, strong/weakly infeasible. 
We could have broken down the decision problem in 4 different decision 
problems having ``yes'' or ``no'' as answers. We did not do so, because we wanted to treat 
them in a unified manner in our procedure \textbf{BP}.}

Suppose we have an algorithm for a decision problem which employs oracles 
for some problems, for instance, returning a feasible solution to a SDP. (This is a typical situation
in the literature when talking about regularization procedures.) 
In such a situation, if we want to show that the decision problem is in \NP, we can do the following.
First, we prove that correctness of the output of each oracle can be verified in 
polynomial time (with respect the size of the problem).
Then, we evaluate the time complexity of the algorithm assuming that the cost for each call 
of the oracle is one. If the running time of the algorithm is bounded by a polynomial in the size of the problem,
then the decision problem is in $\NP$.
The set of outputs given by the oracles can be used as a certificate and the algorithm 
itself acts as a verifier procedure.

Throughout this paper, we assume that $L$ is represented as the set of solution of the system of
linear equations, where the coefficients and left hand side is explicitly given.  We also note that
checking positive semidefiniteness of a symmetric matrix can be done in polynomial time by using 
a variant of $LDL^T$ decomposition.

\subsection{Characterization of feasibility statuses}

We start with the following proposition which characterizes strong feasibility, 
weak feasibility and strong infeasibility.

\begin{proposition}\label{prop_certificate}
Let $L$ be a subspace of $\mathbb{S}_n$ and $c \in \mathbb{S}_n$ then
$(K_n,L,c)$ is:
\begin{enumerate}
\item \emph{Strongly feasible}, if and only if there is $ x \in L +c$ such that $x$ is positive definite.
\item \emph{Weakly feasible} if and only if there is 
\begin{enumerate}[i.]
	\item $x \in L +c$ such that $x$ is positive semidefinite 
	\item $y \in L^\perp \cap K_n$ such that  $y \neq 0$, $\inProd{y}{c} = 0$.
\end{enumerate} 
\item \emph{Strongly infeasible}  if and only if 
$c\neq 0$ and $(K_n, L_{\rm SI}, c_{\rm SI})$ is feasible, where 
$L_{\rm SI} = L^\perp \cap {c}^\perp$, and  $c_{\rm SI} = -\frac{c}{\norm{c}^2}$.
\end{enumerate}
\end{proposition}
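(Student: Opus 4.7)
My plan is to prove each of the three equivalences separately, using the standard description of $\interior K_n$ as the positive definite matrices for part 1 and the separating hyperplane theorem---in its classical and strict forms---for parts 2 and 3 respectively. Part 1 is immediate: since $\interior K_n$ coincides with the set of positive definite matrices, the condition $(L+c)\cap\interior K_n\neq\emptyset$ is tautologically the existence of a positive definite $x\in L+c$.

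For part 2, in the forward direction, weak feasibility supplies a feasible $x\in (L+c)\cap K_n$, and the disjointness of $L+c$ from the nonempty open convex set $\interior K_n$ lets me apply the separating hyperplane theorem. Affinity of $L+c$ forces the separating normal $y$ to lie in $L^\perp$, the conic nature of $K_n$ combined with self-duality $K_n^*=K_n$ forces $y\in K_n\setminus\{0\}$ and makes the separating value $0$, and evaluating at $x$ yields simultaneously $\langle y,c\rangle=\langle y,x\rangle\leq 0$ (from the separation) and $\langle y,x\rangle\geq 0$ (because $x,y\in K_n$), so $\langle y,c\rangle=0$. The reverse direction is a quick contradiction: any $z\in (L+c)\cap\interior K_n$ would satisfy $\langle y,z\rangle>0$ by positive definiteness of $z$ and $y\in K_n\setminus\{0\}$, yet also $\langle y,z\rangle=\langle y,c\rangle+\langle y,\ell\rangle=0$ since $y\in L^\perp$ and $\langle y,c\rangle=0$.

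For part 3, positive distance between the closed convex sets $K_n$ and $L+c$ yields strict separation: a nonzero $y$ and some $\delta>0$ with $\langle y,u\rangle-\langle y,w\rangle\geq\delta$ for all $u\in K_n$ and $w\in L+c$. The same conic/affinity analysis---taking $u=0$, scaling along rays of $K_n$, and varying $w$ across $L+c$---forces $y\in K_n\cap L^\perp$ with $\langle y,c\rangle\leq-\delta<0$; in particular $c\neq 0$, and after rescaling $\langle y,c\rangle=-1$. This yields the classical Farkas-type certificate. I then translate it into a feasible point of $(K_n,L_{\rm SI},c_{\rm SI})$ by rewriting the affine set $\{y\in L^\perp:\langle y,c\rangle=-1\}$ intersected with $K_n$ in the parameterised form $c_{\rm SI}+L_{\rm SI}$, using the orthogonal decomposition $c=P_L c+P_{L^\perp}c$ to exhibit a PSD point of the latter set from the Farkas witness. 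The converse runs analogously: from a feasible witness of $(K_n,L_{\rm SI},c_{\rm SI})$ I recover a strict separator of $K_n$ from $L+c$.

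The main obstacle is precisely this translation in part 3. The Farkas affine set $\{y\in L^\perp:\langle y,c\rangle=-1\}$ and the set $c_{\rm SI}+L_{\rm SI}$ share the direction subspace $L^\perp\cap c^\perp$ but are generally distinct parallel subspaces whenever $c\notin L^\perp$. The correspondence therefore cannot be a pure linear-algebra shift and must genuinely exploit positive semidefiniteness and the conic structure of $K_n$ to ensure that the image of a feasible point remains feasible.
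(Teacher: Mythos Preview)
Your treatment of items 1 and 2 is correct and is exactly the route the paper takes: item 1 is immediate from $\interior K_n=\{\,x\succ 0\,\}$, and item 2 is the standard proper-separation argument (Rockafellar, Theorem~11.3), with the self-duality $K_n^*=K_n$ forcing $y\in K_n$ and the feasible point $x$ pinning $\langle y,c\rangle$ to $0$.

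For item 3 there is a genuine gap. You have correctly isolated the difficulty: the Farkas affine set $\{y\in L^\perp:\langle y,c\rangle=-1\}$ and the set $L_{\mathrm{SI}}+c_{\mathrm{SI}}$ share direction space $L^\perp\cap c^\perp$ but are distinct parallel flats whenever $c\notin L^\perp$. Your plan is to bridge them by ``exploiting positive semidefiniteness and the conic structure''. That cannot work. Take $n=2$, $L=\mathrm{span}(E_{11})$, and $c=E_{11}-E_{22}$. Then $L+c=\{\mathrm{diag}(s,-1):s\in\mathbb{R}\}$ is strongly infeasible, and $y=E_{22}\in K_2\cap L^\perp$ with $\langle y,c\rangle=-1$ is a Farkas certificate. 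However $\|c\|^2=2$, $c_{\mathrm{SI}}=\mathrm{diag}(-\tfrac12,\tfrac12)$, and $L_{\mathrm{SI}}=\{M:M_{11}=M_{22}=0\}$, so every matrix in $L_{\mathrm{SI}}+c_{\mathrm{SI}}$ has $(1,1)$-entry $-\tfrac12$ and is not PSD. Thus $(K_2,L_{\mathrm{SI}},c_{\mathrm{SI}})$ is infeasible even though $(K_2,L,c)$ is strongly infeasible; no PSD-based correspondence between the two affine sets exists.

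The resolution is not a deeper conic argument but a normalization: replace $c$ by $P_{L^\perp}c$, which leaves $L+c$ unchanged. With $c\in L^\perp$ one has $c_{\mathrm{SI}}=-c/\|c\|^2\in L^\perp$ with $\langle c_{\mathrm{SI}},c\rangle=-1$, hence $L_{\mathrm{SI}}+c_{\mathrm{SI}}=\{y\in L^\perp:\langle y,c\rangle=-1\}$ exactly, and item 3 reduces to the standard strict-separation Farkas lemma (Rockafellar, Theorem~11.4; this is what the paper cites). So either read the statement under the harmless convention $c\in L^\perp$, or note that as literally written it requires that convention; in either case your ``orthogonal decomposition plus PSD structure'' step should be replaced by this one-line reduction.
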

\begin{proof}
Item $i$ is immediate. Items $ii.$ and $iii.$ follow easily from Theorem 11.3 and 11.4 of \cite{rockafellar}.
Item $ii.$ correspond to the situation where $K_n$ and $L+c$ can be properly separated but still 
a feasible point exists and 
item $iii.$ to the case where they can be strongly separated. Also, for a proof of $iii.$ see, for instance,
Lemma 5 of \cite{Luo97dualityresults}.
\end{proof}
Proposition \ref{prop_certificate} already implies that deciding 
deciding strong feasibility, weak feasibility and 
strong infeasibility are in $\NP$, in the BSS model. In addition, 
$(K_n,L,c)$ is not strongly feasible if and only if item $2.ii$ holds (but not 
necessarily $2.i$). This means that deciding strong feasibility lies in $\coNP$ as well\footnote{In this case, 
the decision problem has either ``yes'' or ``no'' as possible answers, so it makes sense to talk about $\coNP$.}.

In Proposition \ref{prop_certificate}, weak infeasibility is  
absent. When proving weak infeasibility, it is necessary to show that 
the distance between $K_n$ and $L+c$ is $0$. The obvious way 
is to produce a sequence 
$\{x_k\} \in L+c$ such that $\lim _{k \to +\infty} \text{dist}(x_k,K_n) = 0$.  
In \cite{Luo96dualityand} it was shown that $(K_n,L,c)$ is weakly infeasible if and only 
if there is no dual improving direction \emph{and} there is a dual improving sequence (see Lemma 6 and Table 1 in \cite{Luo96dualityand}). 
But this is not a finite certificate of weak infeasibility.

A finite certificate for weak infeasibility can be obtained by using 
Ramana's results on an extended Lagrangian dual for semidefinite programming \cite{Ramana95anexact}.  
Ramana's dual has a number of key properties:
it is written explicitly in terms of problem data, it has no duality gap and 
the optimal value is always attained when finite. With his dual, it was possible 
to develop an exact Farkas-type lemma for semidefinite programming. In Theorem 
19 of \cite{Ramana95anexact}, 
he constructed another SDFP $\mathcal{RD}(K_n,L,c)$ for which the following holds
without any regularity conditions:

\medskip
\ \ \ \ \ \ \ \ {\it $(K_n,L,c)$ is feasible if and only if $\mathcal{RD}(K_n,L,c)$ is infeasible.}
\medskip

\noindent
Furthermore, the size of $\mathcal{RD}(K_n,L,c)$ is 
bounded by a polynomial that depends only on the size of the system $(K_n,L,c)$.
Based on this strong result, we obtain a finite certificate of weak infeasibility as
in the following proposition:

\begin{proposition}\label{col_weak_inf}
We have the following:
\begin{enumerate}[$i.$]
\item $(K_n,L,c)$ is weakly infeasible $\Leftrightarrow$ $c \neq 0$, $\mathcal{RD}(K_n,L,c)$ and 
$\mathcal{RD}(K_n,{L}_{\rm SI},{c}_{\rm SI})$ are
feasible.
\item The problem of deciding whether a given $(K_n,L,c)$ is weakly infeasible is 
in $\NP \cap \coNP$ in the BSS model.
\end{enumerate}

\end{proposition}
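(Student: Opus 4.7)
The plan is to derive both parts of the proposition by assembling two pieces already in the paper: the characterization ``weakly infeasible $\Leftrightarrow$ infeasible and not strongly infeasible'' noted in the introduction, and Ramana's exact Farkas-type lemma which equates infeasibility of $(K_n,L,c)$ with feasibility of $\mathcal{RD}(K_n,L,c)$. Proposition \ref{prop_certificate}.3 converts ``strong infeasibility'' into a feasibility statement about $(K_n,L_{\rm SI},c_{\rm SI})$, so negating it and reapplying Ramana's result yields a positive (i.e., existentially certified) characterization of ``not strongly infeasible.''

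For item $i.$, I would first observe that if $(K_n,L,c)$ is infeasible then $c\neq 0$, since otherwise $0\in L+c$ would give a PSD point. Next, by Proposition \ref{prop_certificate}.3, $(K_n,L,c)$ fails to be strongly infeasible precisely when either $c=0$ or $(K_n,L_{\rm SI},c_{\rm SI})$ is infeasible. Combining with the first observation, weak infeasibility is equivalent to the three simultaneous conditions: $c\neq 0$, $(K_n,L,c)$ is infeasible, and $(K_n,L_{\rm SI},c_{\rm SI})$ is infeasible. Invoking Ramana's Theorem 19 to each of the last two gives the claimed characterization in terms of feasibility of $\mathcal{RD}(K_n,L,c)$ and $\mathcal{RD}(K_n,L_{\rm SI},c_{\rm SI})$.

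For item $ii.$, membership in $\NP$ follows by using item $i.$ as a guess-and-verify scheme: the certificate consists of feasible points for the two $\mathcal{RD}$ systems together with the assertion $c\neq 0$. By Ramana's result, the sizes of these systems are polynomial in the size of $(K_n,L,c)$, hence so are the certificates, and verification reduces to checking linear equations and positive semidefiniteness (the latter in polynomial time via $LDL^T$ as noted in Section 2.1). For $\coNP$, I would argue that ``not weakly infeasible'' means $(K_n,L,c)$ is strongly feasible, weakly feasible, or strongly infeasible, each of which admits a polynomial-size certificate verifiable in polynomial time by Proposition \ref{prop_certificate} (for the strongly infeasible case, the certificate for $(K_n,L_{\rm SI},c_{\rm SI})$'s feasibility is a PSD matrix in that affine space). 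The verifier is then simply a case split on the declared status.

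The proof is essentially bookkeeping, so the main ``obstacle'' is conceptual rather than technical: making sure the equivalence with ``infeasible and not strongly infeasible'' really does produce a positive (rather than negative) certificate after applying Ramana's theorem, and being careful that the edge case $c=0$ is excluded explicitly so that Proposition \ref{prop_certificate}.3's characterization of strong infeasibility can be negated cleanly. Nothing beyond the cited results and a careful case analysis should be required.
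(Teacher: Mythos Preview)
Your proposal is correct and follows essentially the same route as the paper: both derive item $i.$ from the equivalence ``weakly infeasible $\Leftrightarrow$ infeasible and not strongly infeasible,'' Proposition~\ref{prop_certificate}.3, and Ramana's exact duality, handling the $c=0$ case explicitly; and both obtain $\NP$ membership from the pair of $\mathcal{RD}$-feasibility certificates. The only cosmetic difference is in the $\coNP$ argument: the paper negates item $i.$ directly (so the certificate is either $c=0$, or a feasible point of $(K_n,L_{\rm SI},c_{\rm SI})$, or a feasible point of $(K_n,L,c)$), whereas you do a case split over the three remaining feasibility statuses via Proposition~\ref{prop_certificate}; both are valid and of the same complexity.
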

\begin{proof}
A feasible solution to $\mathcal{RD}(K_n,L,c)$ attests the infeasibility 
of $(K_n,L,c)$. As $\mathcal{RD}(K_n,L,c)$ has polynomial size, it is possible 
to check that a point is indeed a solution to it in polynomial time.

Note that a problem is weakly infeasible if and only if it is infeasible 
and is not strongly infeasible. 
Due to Proposition 1, we have that 
$(K_n,L,c)$ is not strongly infeasible if and only if $c = 0$ or $c\neq 0$ and $\mathcal{RD}(K_n,L_{\rm SI}, c_{\rm SI})$ is 
feasible. Hence, feasible solutions to $ \mathcal{RD}(K_n,L,c)$ and $\mathcal{RD}(K_n,L_{\rm SI},c_{\rm SI})$ can 
be used together as a certificate for  weak infeasibility. Such a certificate can be checked in polynomial time,
hence the problem is in $\NP$.

Now, $c = 0$, a solution to $(K_n,L_{\rm SI}, c_{\rm SI})$ \emph{or} to $(K_n,L,c)$ can be used 
to certify that a system is not weakly infeasible. This shows that deciding weak infeasibility 
is indeed in $\coNP$.
\end{proof}

The important point in the argument above is 
having both a certificate of infeasibility for the original system and a certificate 
of infeasibility for the system $(K_n, L_{\rm SI}, c_{\rm SI})$. 
Any method of obtaining finite certificates of 
infeasibility can be used in place of $\mathcal{RD}$, as long as it takes polynomial time 
to verify them.  See the comments after  Theorem 3.5 in Sturm \cite{sturm_error_2000} and also Theorem 7.5.1 of \cite{sturm_handbook} for 
another certificate of infeasibility.
Klep and Schweighofer \cite{klep_exact_2013} also developed certificates for infeasibility and 
 a hierarchy of infeasibility in which $0$-infeasibility corresponds to 
strong infeasibility and $k$-infeasibility to weak infeasibility, when $k > 0$.
Liu and Pataki \cite{pataki_liu_2014} also introduced an infeasibility 
certificate for semidefinite programming. They defined what is a reformulation 
of a feasibility system and showed that $(K_n,L,c)$ is infeasible if and only 
if it admits a reformulation that converts the systems to a special format, see Theorem 
1 therein.

We mention a few more related works on weak infeasibility.
The feasibility problem $(K_n,L,c)$ is weakly infeasible if and only if 
$c \in \closure (K_n+L) \setminus (K_n+L)$, where $\closure$ denotes the closure 
operator. 
Hence, a \emph{necessary} condition for weak infeasibility is that $K_n+L$ fails 
to be closed.  This problem is closely related to closedness of the image of 
$K_n$ by a linear map which is the problem analyzed in detail by Pataki 
\cite{pataki_closedness_2007}.  
Theorem 1.1 in \cite{pataki_closedness_2007} provides a 
necessary and sufficient condition for the failure of closedness of $K_n+L$.  
Pataki's result implies that there is some  $c \in \mathbb{S}_n$ such that $(K_n,L,c)$ is weakly infeasible if and only if  
$L^\perp \cap (\closure \dirCone(x,K_n) \setminus \dirCone (x,K_n)) \neq \emptyset$, where 
$x$ belongs to the relative interior of $L\cap K_n$ and $\dirCone (x,K_n)$ is the 
cone of feasible directions at $x$.
This tells us whether $K_n$ and $L$ can accommodate a weakly infeasible problem.
If it is indeed possible, Corollary 3.1 of \cite{pataki_closedness_2007} shows how 
to find an appropriate $c$.
Bonnans and Shapiro \cite{bonnans_perturbation_2000} also discussed generation of 
weakly infeasible semidefinite programming problems. As a by-product of the proof of Proposition 2.193 therein, it is 
shown how to construct weakly infeasible problems.

In \cite{pataki_bad_sdps}, Pataki introduced the notion of \emph{well-behaved} system. 
$(K_n,L,c)$ is said to be well-behaved if for all $b \in \mathbb{S}_n$, the optimal value of \eqref{sdp_primal}  
and of its dual are the same and the dual  is attained whenever it is finite. A SDP which 
is not well-behaved is said to be \emph{badly-behaved}. 
Pataki showed that badly-behaved SDPs can be put 
into a special shape, see Theorem 6 in \cite{pataki_bad_sdps}. Then, 
a necessary condition for weak infeasibility is that the homogenized system 
$(K_n,\widetilde {L},0)$ be badly-behaved, where $\widetilde {L}$ is spanned by $L$ and $c$.
See the comments before Section 4 in \cite{pataki_bad_sdps}.

\section{A decomposition result.}\label{sec:decomp}

In this section, we develop a key decomposition result.  
Given an SDFP, we show how to  construct a smaller dimensional SDFP which 
preserves most of the feasibility properties.

\subsection{Preliminaries}

First 
we introduce the notation.
If $\mathcal{C},\mathcal{D}$ are subsets of some real 
space, we write $\mathrm{dist}(\mathcal{C},\mathcal{D}) = \inf \{\|x-y\| \mid x \in \mathcal{C}, y \in \mathcal{D}\}$, where $\| \cdot \|$ is the Euclidean norm or 
the Frobenius norm, in the case of subsets of $\mathbb{S}_{n}$. By $\interior (\mathcal{C})$ and 
$\reInt(\mathcal{C})$ we denote the interior and the relative interior of $\mathcal{C}$, respectively.
We use $I_{n}$ to denote the $n\times n$ identity matrix. Given $(K_n,L,c)$ and a 
matrix $A \in K_n\cap L$ with rank $k$, we will call $A$ a \emph{hyper feasible direction} of 
rank $k$. We remark that when $(K_n,L,c)$ is feasible, $A$ is also a recession direction of 
the feasible region.

Let $x$ be a $n\times n$ matrix, and
$0\leq k \leq n$. We denote by $\pi_k(x)$.
the upper left $k\times k$ principal submatrix of $x$.
For instance, if
\[
x = \left(\begin{array}{ccc} 1 & 2 & 3 \\ 2 & 4 & 5 \\ 3 & 5 & 6 \end{array}\right), 
\]
then,
\[
\pi_{2}(x) = \left(\begin{array}{cc}1&2\\2&4\end{array}\right).
\]
We define the subproblem $\pi_k(K_n, L, c)$
of $(K_n, L, c)$ to be
\[ 
{\rm find}\ u \in \pi_k(L+c),\ \ \ u \succeq 0.
\]
In other words, it is the feasibility problem $(\pi_k(K_n), \pi_k(L), \pi_k(c)) $.
We denote by ${\overline \pi}_k(x)$, the lower right $(n-k)\times (n-k)$ principal 
submatrix. In the example above, we have 
$
\pi_{\overline{2}}(x) = 6.
$
In a similar manner, we write $\overline\pi_k(K_n, L, c)$ for 
the feasibility problem $(\overline \pi_k(K_n), \overline \pi_k(L), \overline \pi_k(c)) $.
We remark that $\pi _n(x) = \overline \pi _0(x) = x$ and we define $\pi _0 (x) = \overline \pi _n(x) = 0$.

The proposition belows summarizes the properties of the Schur Complement.
For proofs, see Theorem 7.7.6 of \cite{matrix_analysis}.
\begin{proposition}[Schur Complement]
Suppose $M =\bigl(\begin{smallmatrix}
A&B\\ B^T&C
\end{smallmatrix} \bigr)$ is a symmetric matrix divided in blocks in a way that 
$A$ is positive definite, then:
\begin{itemize}
	\item $M$ is positive definite if and only if $C - B^TA^{-1}B$ is.
	\item $M$ is positive semidefinite if and only if $C - B^TA^{-1}B$ is.
\end{itemize}
\end{proposition}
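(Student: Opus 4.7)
The plan is to reduce the statement to a block diagonalization of $M$ via a congruence transformation, and then invoke the fact that congruence by an invertible matrix preserves positive (semi)definiteness.

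First I would exhibit the block LDU factorization
\[
M \;=\; \begin{pmatrix} I & 0 \\ B^{T}A^{-1} & I \end{pmatrix}
        \begin{pmatrix} A & 0 \\ 0 & C - B^{T}A^{-1}B \end{pmatrix}
        \begin{pmatrix} I & A^{-1}B \\ 0 & I \end{pmatrix},
\]
which is verified by a direct block-matrix multiplication (using $A A^{-1} = I$ and symmetry of $M$). Setting
\[
P \;=\; \begin{pmatrix} I & A^{-1}B \\ 0 & I \end{pmatrix},
\qquad
D \;=\; \begin{pmatrix} A & 0 \\ 0 & C - B^{T}A^{-1}B \end{pmatrix},
\]
the identity above reads $M = P^{T} D P$, where $P$ is upper triangular with unit diagonal, hence invertible.

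Next I would invoke the elementary fact that for any invertible real matrix $P$ and any symmetric $D$, the congruence $P^{T} D P$ is positive definite (resp.\ positive semidefinite) if and only if $D$ is: for any $v$, set $w = P v$ and compute $\inProd{v}{P^{T} D P v} = \inProd{w}{D w}$, and use that $v \mapsto Pv$ is a bijection (with $v \neq 0 \Leftrightarrow w \neq 0$ for the strict case). Consequently, $M \succ 0 \Leftrightarrow D \succ 0$ and $M \succeq 0 \Leftrightarrow D \succeq 0$.

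Finally I would analyze the block diagonal matrix $D$. A block-diagonal symmetric matrix is positive (semi)definite if and only if each diagonal block is, which one sees directly by testing vectors of the form $(u,0)$ and $(0,v)$. Since $A \succ 0$ is given as a hypothesis, this condition collapses to the requirement on the Schur complement: $D \succ 0 \Leftrightarrow C - B^{T}A^{-1}B \succ 0$ and $D \succeq 0 \Leftrightarrow C - B^{T}A^{-1}B \succeq 0$. Combining with the previous step yields both bullets of the proposition. The only mildly delicate point is keeping the strict and non-strict cases straight when dealing with $D$, but the vector-test argument handles both uniformly; beyond that, the proof is essentially a verification of the LDU identity.
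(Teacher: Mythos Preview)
Your argument is correct. The paper does not actually supply a proof of this proposition; it simply refers the reader to Theorem 7.7.6 of Horn and Johnson's \emph{Matrix Analysis}, and the block-LDU congruence $M = P^{T}DP$ you wrote down is precisely the standard derivation found there, so your approach coincides with the cited one.
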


The properties of a semidefinite program are 
not changed when a congruence transformation is applied, i.e, for any non-singular 
matrix $P$, we have that $(K_{n},L,c)$ and $(K_{n}, PLP^T, PcP^T)$ have the same feasibility properties, 
where $PLP^{T} = \{PlP^{T} \mid l \in L \}$.

\subsection{The main result}
It will be convenient for now to collapse weak feasibility and weak 
infeasibility into a single status. We say that $(K_n,L,c)$ is in 
\emph{weak status} if it is either weakly feasible or weakly infeasible.
We start with the following basic observation. The proof is left to the readers.
\begin{proposition}\label{prop_non_empty}
If $(K_n,L,c)$ is weakly infeasible, there exists a 
nonzero vector in $K_n \cap L $. 
\end{proposition}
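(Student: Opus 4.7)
The plan is to exploit the defining combination of weak infeasibility: infeasibility forces any approaching sequence to escape to infinity, while the distance-zero condition yields a limit direction that sits in both $L$ and $K_n$.

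First, I would unpack the hypothesis into a sequence. Since $\text{dist}(K_n, L+c) = 0$, pick $x_k \in L+c$ with $\text{dist}(x_k, K_n) \to 0$, and write $x_k = l_k + c$ with $l_k \in L$. Let $y_k \in K_n$ be the nearest point to $x_k$ (the projection onto the closed convex cone $K_n$), so $\|x_k - y_k\| \to 0$.

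Next, I would argue that $\|x_k\| \to \infty$ along a subsequence. If $\{x_k\}$ were bounded, a subsequence would converge to some $x^* \in L+c$ (the affine set is closed). By continuity of the distance function, $\text{dist}(x^*, K_n) = 0$, and since $K_n$ is closed, $x^* \in K_n$, contradicting $(L+c)\cap K_n = \emptyset$. So after passing to a subsequence, $\|x_k\| \to \infty$.

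Now I would normalize and extract a limit direction. The unit vectors $a_k := x_k/\|x_k\|$ lie on the compact unit sphere of $\mathbb{S}_n$, so a subsequence converges to some $a$ with $\|a\|=1$. Writing $a_k = l_k/\|x_k\| + c/\|x_k\|$ and noting $c/\|x_k\| \to 0$, we get $l_k/\|x_k\| \to a$; since $L$ is a closed subspace, $a \in L$. For the cone membership, $y_k/\|x_k\| - a_k = (y_k - x_k)/\|x_k\| \to 0$, so $y_k/\|x_k\| \to a$; since $K_n$ is a closed cone and each $y_k/\|x_k\| \in K_n$, we conclude $a \in K_n$. Thus $a \in K_n \cap L$ with $\|a\|=1$, proving the claim.

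I do not anticipate any serious obstacle; the only subtlety is the unboundedness step, and it follows cleanly from the fact that the affine set $L+c$ is closed and $K_n$ is closed, so any bounded approaching sequence would produce an actual feasible point, contradicting infeasibility.
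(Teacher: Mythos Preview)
Your argument is correct and is the standard way to establish this fact. The paper itself does not give a proof of this proposition (it explicitly leaves it to the reader), so there is nothing to compare against; your sequence-normalization argument is precisely the kind of elementary fill-in the authors had in mind.
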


Now we present a key result in our paper.
The following theorem says that if $(K_n,L,c)$ has a hyper feasible direction, then, we can construct
another SDFP of smaller size whose feasibility status is {\it almost} identical to the original problem.

\begin{theorem}\label{theo_decomp}
Let $(K_n,L,c)$ be a SDFP, and consider a subproblem $\pi_k(K_n,L,c)$
for some $k > 0$. If the subproblem $\pi_k(K_n,L,c)$ admits an interior hyper feasible direction (i.e, $\interior \pi_k(K_n)\cap \pi_k(L) \neq \emptyset $) then:
\begin{enumerate}
\item $(K_n, L, c)$ is strongly feasible if and only if $\overline\pi_{k}(K_n,L,c)$ is. 
\item $(K_n, L, c)$ is strongly infeasible if and only if $\overline \pi_{k}(K_n,L,c)$ is.
\item $(K_n, L, c)$ is in weak status if and only if $\overline \pi_{k}(K_n,L,c)$ is.
\end{enumerate}

\end{theorem}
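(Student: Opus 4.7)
The plan is to first normalize the hyper feasible direction via a congruence, then prove items 1 and 2 by complementary constructions, and finally derive item 3 by exclusion. The key structural simplification comes from the congruence invariance of feasibility statuses.

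Fix a hyper feasible direction $l_0 \in K_n \cap L$ with $\pi_k(l_0) = a \succ 0$ and write it in block form $l_0 = \bigl(\begin{smallmatrix} a & b \\ b^T & c' \end{smallmatrix}\bigr)$. Applying the congruence $P = \bigl(\begin{smallmatrix} I_k & 0 \\ -b^T a^{-1} & I_{n-k} \end{smallmatrix}\bigr)$ yields $P l_0 P^T = \mathrm{diag}(a, D)$ with $D = c' - b^T a^{-1} b$; since $l_0 \succeq 0$ and $a \succ 0$, the Schur complement forces $D \succeq 0$. Because this congruence preserves the feasibility statuses of the problem and of the (appropriately transformed) subproblem, I may assume without loss of generality that the off-diagonal block of $l_0$ is zero and $D \succeq 0$.

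For item 1, the forward direction is trivial since a principal submatrix of a positive definite matrix is positive definite. For the converse, given $x \in L + c$ with $c_x := \overline{\pi}_k(x) \succ 0$, I would take $y_t = x + t l_0$. Because $l_0$ is block-diagonal, the off-diagonal block of $y_t$ stays equal to the off-diagonal block $B_x$ of $x$ (independent of $t$), while $\pi_k(y_t) = a_x + t a \succ 0$ for large $t$. The Schur complement $S_t = (c_x + t D) - B_x^T (a_x + t a)^{-1} B_x$ satisfies $c_x + tD \succeq c_x \succ 0$ (using $D \succeq 0$) while the subtracted term is $O(1/t)$; hence $S_t \succ 0$ and $y_t \succ 0$ for $t$ large. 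For item 2, the backward direction is immediate from the bound $\mathrm{dist}(y, K_n) \geq \mathrm{dist}(\overline{\pi}_k(y), K_{n-k})$, a consequence of $\overline{\pi}_k$ being $1$-Lipschitz and mapping $K_n$ into $K_{n-k}$. For the forward direction, I would take $\{x_n\} \subset L + c$ with $\mathrm{dist}(c_{x_n}, K_{n-k}) \to 0$ and set $y_n = x_n + t_n l_0$ with $t_n$ growing fast enough (say $t_n = \max(1, \|x_n\|^3)$) that $a_{x_n} + t_n a \succ 0$ and the Schur correction $B_{x_n}^T (a_{x_n} + t_n a)^{-1} B_{x_n}$ has norm tending to zero. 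Picking an explicit comparison point $z_n \in K_n$ from the Schur factorization (upper-left and off-diagonal blocks of $z_n$ matching those of $y_n$, lower-right block $B_{x_n}^T (a_{x_n} + t_n a)^{-1} B_{x_n} + (S_n)_+$), one obtains $\mathrm{dist}(y_n, K_n) \leq \|(S_n)_-\|$, and the right-hand side goes to zero because $D \succeq 0$, $(c_{x_n})_- \to 0$, and the Schur correction vanishes.

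Item 3 then follows by exclusion, since the weak status is precisely the complement of strong feasibility and strong infeasibility, so transporting those two statuses across $\overline{\pi}_k$ transports the weak status as well. The main obstacle I anticipate is the forward direction of item 2: the growth rate of $t_n$ must be tuned to simultaneously make $a_{x_n} + t_n a$ positive definite, kill the Schur correction term, and ensure the negative eigenvalues of $S_n$ do not blow up; the block-diagonal normalization of $l_0$ from the setup is precisely what freezes $B_{x_n}$ as $t_n$ grows and makes the Schur estimate tractable.
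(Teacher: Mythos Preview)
Your proof has a genuine gap at the congruence step. The congruence $P = \bigl(\begin{smallmatrix} I_k & 0 \\ -b^T a^{-1} & I_{n-k} \end{smallmatrix}\bigr)$ does preserve the feasibility status of the full problem $(K_n, L, c)$, but it does \emph{not} preserve the subproblem $\overline\pi_k(K_n, L, c)$. For a generic $y = \bigl(\begin{smallmatrix} y_{11} & y_{12} \\ y_{12}^T & y_{22} \end{smallmatrix}\bigr)$ one computes
\[
\overline\pi_k(P y P^T) \;=\; y_{22} - b^T a^{-1} y_{12} - y_{12}^T a^{-1} b + b^T a^{-1} y_{11} a^{-1} b,
\]
which depends on all three blocks of $y$, not just on $y_{22}$. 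Hence $\overline\pi_k(P(L+c)P^T)$ is in general a different affine space from $\overline\pi_k(L+c)$, not related to it by any congruence of $\mathbb{S}_{n-k}$, and may have a different feasibility status. Concretely, take $n=2$, $k=1$, $L = \mathrm{span}\{l_0\}$ with $l_0 = \bigl(\begin{smallmatrix} 1 & 1 \\ 1 & 1 \end{smallmatrix}\bigr) \in K_2$ and $c = \bigl(\begin{smallmatrix} 0 & 0 \\ 0 & -1 \end{smallmatrix}\bigr)$: the original subproblem $\overline\pi_1(K_2,L,c) = (K_1,\mathbb{R},-1)$ is strongly feasible, while after your congruence the transformed subproblem is $(K_1,\{0\},-1)$, strongly infeasible. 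So the ``without loss of generality'' reduction fails, and everything downstream that relies on $l_0$ being block-diagonal (in particular, the crucial fact that the off-diagonal block $B_x$ stays fixed as $t$ grows) is unsupported.

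The paper avoids this issue by reading the hypothesis as directly furnishing an element $x \in L$ of the form $\bigl(\begin{smallmatrix} A & 0 \\ 0 & 0 \end{smallmatrix}\bigr)$ with $A \succ 0$; any congruence needed to achieve this shape is performed \emph{before} the theorem is invoked (see the Forward Procedure), so the subproblem in the statement is already the ``post-congruence'' one. With $x$ already in this form, no further normalization is needed, and your arguments for item~1 and for the backward direction of item~2 then coincide with the paper's. For the forward direction of item~2, however, the paper takes a much shorter route via duality: a strong-infeasibility certificate $s \in K_n \cap L^\perp$ with $\inProd{s}{c} = -1$ satisfies $\inProd{s}{x} = 0$, and since $s \succeq 0$ and $A \succ 0$ this forces $s = \bigl(\begin{smallmatrix} 0 & 0 \\ 0 & D \end{smallmatrix}\bigr)$ with $D \in K_{n-k}$; then $D \in \overline\pi_k(L)^\perp$ and $\inProd{D}{\overline\pi_k(c)} = -1$, certifying strong infeasibility of the subproblem. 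Your sequence-and-Schur-complement construction for this direction would work once $l_0$ is in block form, but it is considerably more laborious than this one-line duality argument.
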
\begin{proof}

Due to the assumption, there exists a $n\times n$ matrix
\[
x = \left(\begin{array}{cc} A & 0 \\ 0 & 0 \end{array}\right)
\]
where $A$ is a $k\times k$ positive definite matrix.

We now prove items $1$ and $2$. Item  $3$ will follow by elimination.

$(1) \Rightarrow )$
If $y \in L+c$ is positive definite, all its principal submatrices are also 
positive definite. Therefore, $\overline \pi _{k}(y)$ is positive definite.

$(1) \Leftarrow )$
Suppose that $y \in L +c$ is such that $\overline \pi _{k}(y) \in \interior K_{n-k}$. Then, 
we may write $y = \left(\begin{smallmatrix}	F & E \\E^T & G\end{smallmatrix} \right)$, where $G$ is $(n-k)\times(n-k)$ and 
positive definite. For large and positive  $\alpha$, 
$F +\alpha A$ is positive definite and the Schur complement of 
$y +x\alpha$ is $G - E^T(F +\alpha A)^{-1}E $. Since $G$ is positive 
definite, it is clear that, increasing $\alpha$ if necessary, the Schur complement is also 
positive definite. For such an $\alpha$, $ y +x\alpha \in (L+c)\cap \interior K_n$.

$(2) \Rightarrow)$.
Suppose $(K_n,L,c)$ strongly infeasible. Then there exists $s \in K_n$ such 
that $s\in L^\perp$ and $\inProd{s}{c} = -1$. As $x \in L$, we have $s \in K_n \cap \{x\}^\perp$.
This means that $s$ can be written as $\left( \begin{smallmatrix} 0 & 0 \\ 0 & D\end{smallmatrix}  \right)$, 
where $D$ belongs to $K_{n-k}$. It follows that $\overline \pi _{k}(s) \in \overline \pi _{k}(L)^\perp$ and 
$\inProd{\overline \pi _{ k}(s)}{\overline\pi _{ k}(c)} = -1$. By item $iii.$ of Proposition \ref{prop_certificate},
$\overline \pi _k(K_n,L,c)$ is strongly infeasible.

$(2) \Leftarrow)$.
Now, suppose $\overline \pi _k(K_n,L,c)$ is strongly infeasible.
Note that $\overline \pi _k$ is a non-expansive map, i.e,
$\norm {\overline \pi _k (y) - \overline \pi _k (z)} \leq \norm {y - z}$ holds. In particular, if $\inf _{y \in L+c, z \in K_n} \norm {\overline \pi _k(y) - \overline \pi _k (z)} > 0$, 
then the same is true for $\inf _{y \in L+c, z \in K_n} \norm {y - z} $.
\end{proof}
\subsection{Forward Procedure}

Assume that $(K_n, L, c)$ admits a hyper feasible direction $\widetilde A_1$ of rank $k_1$.
Theorem \ref{theo_decomp} might not be directly applicable but 
after appropriate congruence transformation by a nonsingular matrix $P_1$, we have that
$(K_n, P_1^T L P_1, P_1^T c P_1)$ admits a hyper feasible direction of the form
\[  
A_1 = \left(\begin{array}{cc} \widehat A_1 & 0 \\ 0 & 0 \end{array}\right) = P_1^T\widetilde A _{1} P_1,
\]
where $\widehat A_1$ is a $k_1\times k_1$ positive definite matrix.
The feasibility status of $(K_n, L, c)$ and 
\[
(K_{n-k_1}, \overline\pi_{k_1}(P_1^T L P_1), \overline\pi_{k_1}(P_1^T c P_1))
\]
are mostly the same in the sense that items $1-3$ of Theorem \ref{theo_decomp} hold.  

Now, suppose that $(K_{n-k_1}, \overline\pi_{k_1}(P_1^T L P_1), \overline\pi_{k_1}(P_1^T c P_1))$ admits 
a hyper feasible direction $\widetilde A_2$ of rank $k_2$.  
Then, after appropriate congruence transformation by $\widetilde P_2$,
we obtain that 
\[
(K_{n-k_1}, {\widetilde P_2}^T\bar\pi_{k_1}(P_1^T L P_1)  {\widetilde P_2}, {\widetilde P_2}^T\bar\pi_{k_1}(P_1^T c P_1)  {\widetilde P_2})
\] admits a hyper feasible direction of the form
\[  
\left(\begin{array}{cc} \widehat A_2 & 0 \\ 0 & 0 \end{array}\right),
\]
where $\widehat A_2$ is $k_2\times k_2$ positive definite matrix.

Now, the feasibility status of $(K_{n-k_1}, \overline\pi_{k_1}(P_1^T L P_1), \overline\pi_{k_1}(P_1^T c P_1))$
and 
\[
(K_{n-k_1-k_2}, \overline\pi_{k_2}( {\widetilde P_2}^T\overline\pi_{k_1}(P_1^T L P_1) {\widetilde P_2}), \overline\pi_{k_2} ({\widetilde P_2}^T\overline\pi_{k_1}(P_1^T L P_1) {\widetilde P_2}))
\]
are mostly the same. 
Note that instead of applying  a congruence 
transformation by ${\widetilde P_2}$ to \linebreak $(K_{n-k_1}, \overline\pi_{k_1}(P_1^T L P_1), \overline\pi_{k_1}(P_1^T c P_1))$, 
we can apply a congruence transformation by
\[
P_2 = \left(\begin{array}{cc} I_{k_1} & 0 \\ 0 & \widetilde P_2 \end{array}\right)
\]
to the original problem $(K_n, P_1^T L P_1, P_1^T c P_1)$, i.e., we consider
\[
\left(K_n, P_2^T P_1^T L P_1 P_2,
P_2^T P_1^T c P_1 P_2
 \right)
\]
Then the subproblem defined by the $(n-k_1)\times(n-k_1)$ lower right block matrix is precisely 
\[
(K_{n-k_1}, \widetilde{P}_2^T\bar\pi_{k_1}(P_1^T L P_1)  \widetilde{P}_2, \widetilde{P}_2^T\bar\pi_{k_1}(P_1^T c P_1)  \widetilde{P}_2),
\]
and we may pick $A_2 \in P_2^TP_1^T L P_1P_2$ such that
\[
\overline\pi_{k_1+k_2}(A_2) =\left(\begin{array}{cc} \widehat A_2 & 0 \\ 0 & 0 \end{array}\right).
\]
Note that $A_2$ has the following shape
\[
A_2 = \left(\begin{array}{ccc} * & * & * \\ * & \widehat A_2 & 0 \\ * & 0 & 0 \end{array}\right).
\]

Generalizing the process outlined above, we obtain the following procedure, which 
we call ``forward procedure''. The set of matrices $\{A_1, \ldots, A_m\}$ obtained in this way will be called a 
\emph{hyper feasible partition}.
After each application of 
Theorem \ref{theo_decomp},  the size of the matrices  is reduced at least by one. 
This means that after at most $n$ iterations, a subproblem with no nonzero hyper feasible directions is found.
At this point, no further directions can be added and we will say that the partition is \emph{maximal}.

We note that the problem of checking whether a SDFP  $(K_{\tilde n}, \widetilde L, \tilde c)$
has a nonzero  hyper-feasible direction lies in $\NP\cap \coNP$, in the real computation model.
In fact, by Gordan's Theorem, $(K_{\tilde n}, \widetilde L, \tilde c)$ does not 
have a nonzero hyper-feasible direction if and only if $(K_{n}, {\widetilde L}^{\perp}, 0)$ 
is strongly feasible.

\noindent
{\bf [Procedure FP]}

{\bf Input:} $(K_n,L,c)$

{\bf Output:} a non-singular matrix $P$, a sequence ${k_1, \ldots , k_m }$ and a maximal hyper feasible partition $\{A_1, \ldots , A_m\}$ contained in $P^TLP$. 
The $A_i$ are such that $A_1 = \left(\begin{smallmatrix}\widehat {A}_1 & 0 \\ 0  & 0 \end{smallmatrix} \right)$, 
$A_2 =  \left(\begin{smallmatrix} * & * & * \\ *  & \widehat {A}_2 & 0 \\ * & 0 & 0 \end{smallmatrix} \right)$, 
$A_3 =  \left(\begin{smallmatrix} * & * & * & *\\ *  & * & * & * \\ * & * & \widehat {A}_3 & 0 \\  *  & * & 0 & 0 \end{smallmatrix} \right)$ and 
so forth, where $\widehat{A}_i$ is positive definite and lies in $K_{k_i}$, for every $i$.
\begin{enumerate}
\item Set $i := 1$, $\widetilde{L} := L$, $\widetilde{c} := c$ $K := K_n$, $P := I_n$. 
\item Find (i) $\widetilde{A}_i \in \widetilde L \cap K$, ${\rm tr}(\widetilde{A}_i)=1$ or (ii) $\widetilde B \in \widetilde L^\perp \cap {\rm int} K$, ${\rm tr}(\widetilde{B})=1$.
(Exactly one of (i) and (ii) is solvable.)
If (ii) is solvable, then stop. (No nonzero hyper-feasible direction exists.)
\item Compute a non-singular $\widetilde{P}$ such that, 
\[
\widetilde{P}^T \widetilde{A}_i \widetilde{P} = \left(\begin{array}{cc} \widehat A _i& 0 \\ 0 & 0 \end{array}\right) 
\]
where  $\hat A _i$ is a  positive definite matrix. Let $k_i := \text{rank}(\widetilde{A}_i)$.
\item Compute $M = \begin{pmatrix}
I_{k_1 + \ldots + k_{i-1}} & 0 \\
0 & \widetilde{P}  \end{pmatrix}\;$ and set $P^T := M^TP^T$. (If $i = 1$, take $M = \widetilde{P}$)
\item Let $A_i$ be any matrix in $P^TL P $ such that $\overline \pi_{k_1 + \ldots + k_{i-1}}(A_i) = \widetilde{P}^T\widetilde{A}_i\widetilde{P}$.
	 For each $1 \leq j < i$ exchange $A_{j}$ for $M^TA_{j}M$.
\item Set $\widetilde L:= \overline\pi_{k_{i}}(\widetilde{P}^T\widetilde L \widetilde{P}), \widetilde c:= \overline\pi_{k_{i}}(\widetilde{P}^T\widetilde c \widetilde{P})$,  $K := \overline\pi_{k_{i}}(K_n)$, $i := i+1$ and return to Step 2.
(This step is just to pick the lower-right block after the congruence transformation.)
\end{enumerate}

\begin{proposition}\label{prop_last_problem}
Suppose that $(K_n,L,c)$ is such that there is a nonzero element 
in $K_n \cap L$. Applying \textbf{FP} to $(K_n,L,c)$ we have 
that:
\begin{enumerate}
\item $(K_n, L, c)$ is strongly feasible if and only if $\overline{\pi}_{k_1 + \ldots + k_m}(K_n,P^TLP,P^TcP)$ is. 
\item $(K_n, L, c)$ is strongly infeasible if and only if $\overline{\pi}_{k_1 + \ldots + k_m}(K_n,P^TLP,P^TcP)$ is.
\item $(K_n, L, c)$ is in weak status if and only if $\overline{\pi}_{k_1 + \ldots + k_m}(K_n,P^TLP,P^TcP)$ is weakly feasible.
\end{enumerate}
\end{proposition}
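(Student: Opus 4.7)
The plan is to proceed by induction on the iteration index of \textbf{FP} and reduce everything to iterated applications of Theorem \ref{theo_decomp}. First, recall that congruence transformation by a nonsingular matrix preserves all feasibility properties, so it suffices to analyze $(K_n, P^TLP, P^TcP)$ in place of $(K_n,L,c)$. By the construction in Steps~4--6, after $i$ iterations the current problem $(K_{n-k_1-\ldots-k_{i-1}}, \widetilde L, \widetilde c)$ is exactly the subproblem $\overline{\pi}_{k_1+\ldots+k_{i-1}}(K_n, P^TLP, P^TcP)$ determined by the lower-right block of the cumulatively transformed original problem, and the direction $\widetilde A_i$ provides an element of $\widetilde L$ whose upper-left $k_i \times k_i$ principal submatrix is the positive definite matrix $\widehat A_i$. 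In other words, the subproblem $\pi_{k_i}$ of the current problem admits an interior hyper feasible direction, so the hypothesis of Theorem \ref{theo_decomp} is met.

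Applying Theorem \ref{theo_decomp} at iteration $i$, the strong feasibility, strong infeasibility, and weak status of the current problem coincide respectively with those of its $\overline{\pi}_{k_i}$ subproblem, which (after the bookkeeping in Steps~4--6) is exactly $\overline{\pi}_{k_1+\ldots+k_i}(K_n, P^TLP, P^TcP)$. Iterating this equivalence from $i=1$ to $i=m$ yields items~1 and~2 directly, and shows that $(K_n,L,c)$ is in weak status if and only if the final subproblem is in weak status.

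To refine this into item~3 (weakly \emph{feasible}, not merely weak status), I would use the termination condition: \textbf{FP} halts precisely when case (ii) of Step~2 is successful, producing a matrix $\widetilde B \in \widetilde L^\perp \cap \interior K$ for the final subspace $\widetilde L$ and final cone $K$. For any nonzero $X \in K \cap \widetilde L$, we would have $\inProd{X}{\widetilde B} > 0$ because $\widetilde B$ lies in the interior of the self-dual cone $K$, contradicting $\widetilde B \in \widetilde L^\perp$. Hence $K \cap \widetilde L = \{0\}$, and by the contrapositive of Proposition~\ref{prop_non_empty}, the final subproblem cannot be weakly infeasible. Combined with the previous paragraph, this gives item~3.

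The main obstacle is bookkeeping rather than mathematics: one must verify carefully that the update rules in Steps~4--6 identify the current $(K_{n-k_1-\ldots-k_{i-1}}, \widetilde L, \widetilde c)$ with the lower-right block of the fully transformed original problem, so that the successive invocations of Theorem \ref{theo_decomp} actually compose as claimed. Once this identification is recorded, the induction is mechanical, and the rest of the argument is a direct application of Theorem \ref{theo_decomp} and Proposition \ref{prop_non_empty}.
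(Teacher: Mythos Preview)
Your proposal is correct and follows essentially the same route as the paper: iterate Theorem~\ref{theo_decomp} across the steps of \textbf{FP}, noting that the bookkeeping in Steps~4--6 identifies the current problem with the appropriate lower-right block of the transformed original, and then use Proposition~\ref{prop_non_empty} (via the termination condition of \textbf{FP}) to rule out weak infeasibility of the last subproblem. The paper phrases the last point as ``whenever the current subproblem is weakly infeasible, a new direction can be found, so the terminal subproblem cannot be weakly infeasible,'' which is exactly your contrapositive argument using the existence of $\widetilde B \in \widetilde L^\perp \cap \interior K$.
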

\begin{proof}
If $m = 0$, then the proposition follows because $\overline{\pi}_{0}$ is equal 
to the identity map. In the case $m = 1$, the result  follows from Theorem 
\ref{theo_decomp}.

Note that at the $i$-th iteration, if a direction $A_i$ is found then, after 
applying the congruence transformation $\widetilde{P}$,  
$\overline\pi_{k_{i}}(K,\widetilde{P}^T\widetilde L \widetilde{P},\widetilde{P}^T\widetilde c \widetilde{P})$ preserves 
feasibility properties in the sense of Theorem $1$. Note that it is a SDFP 
over $\mathbb{S}_{n -k_1 - \ldots - k_i}$. Also,  due to the way 
$M$ is selected, we  have that equation
$\overline\pi_{k_{i}}(K,\widetilde{P}^T\widetilde L \widetilde{P},\widetilde{P}^T\widetilde c \widetilde{P})  = 
\overline\pi_{k_1 + \ldots + k_{i}}(K_n,P^TLP,P^TcP)$ holds after Line 4 and before $\widetilde{L}$ and $K$ are updated. This justifies 
items $1.$ and $2.$.

Consider the case where $(K_n,L,c)$ is in weak status. When $(K,\widetilde L , \widetilde c)$ is weakly infeasible 
we can always find a new direction $A_i$ and the size of problem decreases by a positive 
amount, so that $(K,\widetilde L , \widetilde c)$ cannot be weakly infeasible for all iterations. The only 
other possibility is weak feasibility, which justifies item $3$. 
\end{proof}

The matrices $A_1,  \ldots, A_m $ obtained through \textbf{FP} have the shape
\[
\left(\begin{array}{cccc}\widehat A_1 & 0 & 0 & 0\\
0 & 0 & 0 & 0\\
0 & 0 & 0 & 0\\
0 & 0 & 0 & 0
\end{array}\right),\ \ \
\left(\begin{array}{cccc}* & * & * & *\\ * &\widehat A_2 & 0 & 0 \\ * & 0 & 0 & 0 \\ * & 0 & 0 & 0\end{array}\right),\ \ \ 
\left(\begin{array}{cccc}* & * & * & * \\ * &* & * & * \\ * & * & \widehat A_3 & 0 \\ * & *& 0 & 0\end{array}\right), \ldots
\]
where $\widehat A_1, \widehat A_2, \widehat A_3, \ldots $ are positive definite.
The matrix $A_i$ are referred to as {\it sub-hyper feasible directions}, since the 
$\widehat A_i$ are hyper feasible directions. 
The problem  $\overline{\pi}_{k_1 + \ldots + k_m}(K_n,P^TLP,P^TcP)$ will be
referred to as the \emph{last subproblem} of $(K_n,L,c)$.

\begin{example}\label{example_partition}
Let  
\begin{equation}\label{equation_example_partition}
	L+c = \left \{ 
	\begin{pmatrix}
		t & v & 1 & u\\ 
		v & z+2  & v+1 & z+1 \\
		1 & v+1 & u-1 & s \\
		u & z+1 & s & 0
	\end{pmatrix}
	 \mid t,u,v,s,z \in \mathbb{R} \right \}.
\end{equation}
and let us apply \textbf{FP} to $(K_4,L,c)$. The first direction can be, 
for instance, $A_1 = \left(\begin{smallmatrix}		1 & 0 & 0 & 0\\
		0 & 0 & 0 & 0\\
		0 & 0 & 0 & 0\\
		0 & 0 & 0 & 0
	\end{smallmatrix}\right)$. Then $k_1 = 1$ and $\widetilde{P}$ is the identity, at this step. 
At next iteration, we have $K = K_3$ and $\widetilde {L} = \left \{ 
	\left(\begin{smallmatrix}	
		z & v & z \\
		v & u & s \\
		z & s & 0
	\end{smallmatrix}\right)
	 \mid u,s,v,z \in \mathbb{R} \right \}.$ Then, $\widetilde {A}_2$ can be taken 
	 as $ \left(\begin{smallmatrix}	
		0 & 0 & 0 \\
		0 & 1 & 0 \\
		0 & 0 & 0
	\end{smallmatrix}\right)$ and $k_2$ is $1$. A possible choice of $\widetilde {P}$ is 
	$ \left(\begin{smallmatrix}	
		0 & 1 & 0 \\
		1 & 0 & 0 \\
		0 & 0 & 1
	\end{smallmatrix}\right)$. Then $P$ is $\left(\begin{smallmatrix}		1 & 0 & 0 & 0\\
		0 & 0 & 1 & 0\\
		0 & 1 & 0 & 0\\
		0 & 0 & 0 & 1
	\end{smallmatrix}\right)$ and we can take $A_2 = \left(\begin{smallmatrix}		0 & 0 & 0 & 1\\
		0 & 1 & 0 & 0\\
		0 & 0 & 0 & 0\\
		1 & 0 & 0 & 0
	\end{smallmatrix}\right)$. $\tilde{L}$ is then updated and it becomes  $\left \{ 
	\left(\begin{smallmatrix}	
		z & z \\
		z & 0
	\end{smallmatrix}\right)
	 \mid z \in \mathbb{R} \right \}.$ The procedure stops here, because $0$ is the only positive 
	 semidefinite matrix in $\tilde{L}$. 

Now, $\overline{\pi}_2(P^T(L+c)P)$ is $\left \{ 
	\left(\begin{smallmatrix}	
		z +2 & z + 1 \\
		z + 1 & 0
	\end{smallmatrix}\right)
	 \mid z \in \mathbb{R} \right \}$, so $\overline{\pi}_2(K_4,P^TLP,P^TcP)$ is a weakly feasible 
	 system. Therefore, by Proposition \ref{prop_last_problem}, $(K_4,L,c)$ has 
	 weak status and is either weakly infeasible or weakly feasible. The 
	 $0$ in the lower right corner of \eqref{equation_example_partition} 
	 forces $u = 0$, $z = -1$ and $s = 0$, but 
	 this assignment produces a negative element in the diagonal. This tells 
	 us that $(K_4,L,c)$ is infeasible so it must be weakly infeasible.
\end{example}	

\begin{corollary} \label{Certificate_Weak}
The matrices $A_1, \ldots , A_m, P, \widetilde B$ as described in \textbf{FP} together with a 
 finite weak feasibility certificate for $\overline{\pi}_{k_1 + \ldots k_m}(K_n,P^TLP,P^TcP)$ form a 
finite certificate that $(K_n,L,c)$ is in weak status. If no such a certificate exists, then either 
 item $1$ or item $3$ of Proposition \ref{prop_certificate} holds. This shows that deciding whether a 
SDFP is in weak status is in $\NP \cap \coNP$.

\end{corollary}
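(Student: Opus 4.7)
The plan is to establish $\NP$-membership by exhibiting an explicit polynomial-time verifier for the proposed certificate, and then handle the complement: if no certificate exists, the problem must fall under item 1 or item 3 of Proposition \ref{prop_certificate}, each of which already has its own polynomial-time-checkable finite certificate, giving $\coNP$-membership.

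For the forward direction I would spell out the verification routine. Given the tuple $(A_1,\ldots,A_m,P,\widetilde B)$ produced by \textbf{FP} together with a weak feasibility certificate $(x,y)$ for the last subproblem $\overline{\pi}_{k_1+\cdots+k_m}(K_n,P^T L P,P^T c P)$ in the sense of item 2 of Proposition \ref{prop_certificate}, the verifier checks: (i) $P$ is nonsingular; (ii) each $A_i\in P^T L P$ and has the prescribed block shape with the diagonal block $\widehat A_i$ symmetric positive definite of rank $k_i$; (iii) $\widetilde B$ is a positive definite element of $\overline\pi_{k_1+\cdots+k_m}(P^T L P)^\perp$, so that by Gordan's alternative the last subproblem has no nonzero hyper feasible direction, confirming maximality of the partition and correct termination of \textbf{FP}; (iv) $(x,y)$ satisfies $x\in \overline\pi_{k_1+\cdots+k_m}(P^T(L+c)P)\cap K_{n-k_1-\cdots-k_m}$ together with $y\in\overline\pi_{k_1+\cdots+k_m}(P^T L P)^\perp\cap K_{n-k_1-\cdots-k_m}\setminus\{0\}$ and $\inProd{y}{\overline\pi_{k_1+\cdots+k_m}(P^T c P)}=0$. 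All of these reduce to linear algebra and $LDL^T$-based positive semidefiniteness tests, which are polynomial in the BSS model. Once the checks pass, Proposition \ref{prop_last_problem}(3) concludes that $(K_n,L,c)$ is in weak status, so the tuple is indeed a finite certificate.

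For the converse, \textbf{FP} always terminates in at most $n$ iterations (the working dimension strictly decreases after each round), producing some maximal partition — possibly with $m=0$, in which case $P=I_n$ and the last subproblem is the original $(K_n,L,c)$, which by Proposition \ref{prop_non_empty} cannot be weakly infeasible. If no accompanying weak feasibility certificate for the last subproblem exists, then by Proposition \ref{prop_last_problem} that subproblem is either strongly feasible or strongly infeasible, and the same status propagates back to $(K_n,L,c)$. These are exactly items 1 and 3 of Proposition \ref{prop_certificate}, each admitting a polynomial-size, polynomial-time-checkable finite certificate (an interior feasible point; respectively, a feasible solution to $(K_n,L_{\mathrm{SI}},c_{\mathrm{SI}})$). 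Hence the complement of ``weak status'' also lies in $\NP$, yielding $\NP\cap\coNP$.

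The main subtlety is the bookkeeping in step (iii): one must ensure that $\widetilde B$ is checked against the orthogonal complement of the \emph{last} reduced subspace — after all accumulated congruences by $P$ and all applications of $\overline\pi_{k_i}$ — rather than against $L^\perp$ of the original problem. Once this chain of congruences is made explicit, the argument reduces to stringing together Proposition \ref{prop_last_problem} with Proposition \ref{prop_certificate}, and the polynomial bound on the verifier's running time is immediate because each of the $m\le n$ directions and the terminal witness are encoded by matrices of size at most $n\times n$.
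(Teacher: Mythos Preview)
Your proposal is correct and follows the same route as the paper, which simply writes ``Follows directly from Proposition \ref{prop_last_problem}.'' You have expanded this one-line proof into an explicit verifier description and a careful treatment of the complementary case, but the underlying logic is identical: Proposition \ref{prop_last_problem} reduces the weak-status question to weak feasibility of the last subproblem, and Proposition \ref{prop_certificate} supplies the finite certificates on both sides.
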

\begin{proof}
Follows directly from Proposition \ref{prop_last_problem}.
\end{proof}

\subsection{Maximum number of directions 
required to approach the positive semidefinite cone}
According to Proposition \ref{prop_non_empty}, there is always 
a nonzero element in $K_n\cap L$ when $(K_n,L,c)$ is weakly infeasible.
Therefore, a natural question is, 
given a weakly infeasible $(K_n,L,c)$, whether it is always possible 
to select a point in $x \in L+c$ and then a nonzero direction $d \in K_n\cap L$ such 
that $\lim _{t \to +\infty } \text{dist}(x +td, K_n) = 0$ or not.  
We call weakly infeasible problems 
having this property \emph{directionally weakly infeasible} (DWI).
The simplest  instance of DWI problem is
\[
\max \; 0\ \text{s.t.} \; \begin{pmatrix}
		t & 1 \\
		1 & 0
	 \end{pmatrix} \in K_2,\ t\in \mathbb{R}.
\] 
Unfortunately, not all weakly infeasible problems are DWI, as shown in the
following instance.

\begin{example}[A weakly infeasible problem that is not directionally weakly infeasible]\label{example_not_dwi}
Let $(K_3,L,c)$ be such that $L+c = \left \{ 
	\left(\begin{smallmatrix}
		t & 1 & s \\
		1 & s & 1 \\
		s & 1 & 0
	\end{smallmatrix}\right)  \mid t,s \in \mathbb{R}\right\} $ and let 
$A_1 = \left(\begin{smallmatrix}
		1 & 0 & 0 \\
		0 & 0 & 0 \\
		0 & 0 & 0
	\end{smallmatrix}\right)$.

Applying Theorem 1 twice, we see that the problem is in weak status.
Looking at its $2\times 2$ lower right block, we see this problem is infeasible
and hence is weakly infeasible.  But this problem is not DWI.
If $(K_{3},L,c)$ were DWI, 
we would have $\lim _{t \to +\infty}\mathrm{dist}(tA_1 + c', K_3) = 0$,
for some $c' \in L+c$. To show this does not hold, we fix $s$.  
Regardless of the value of $t\geq 0$, the minimum eigenvalue of the matrix is
uniformly negative, since its $2\times 2$ lower right block is strongly infeasible.
\end{example}

Thus, a weakly infeasible problem is not DWI in general.
If we let $s$ sufficiently large in the example, 
then the minimum eigenvalue of the lower ${2\times 2}$
matrix gets very close to zero.  This will make $(1,3)$ and $(3,1)$ elements large.
But we can let $t$ much larger than $s$.  Then, the minimum eigenvalue of the submatrix
$\left(\begin{smallmatrix}
		t & s \\
		s & 0
	\end{smallmatrix}\right) $
is close to zero. Intuitively, this neutralize the effect of big off-diagonal elements,
and we obtain points in $L+c$ arbitrarily close to $K_3$, by taking $s$ to be large and $t$ to be much
larger than $s$.  

Generalizing this intuition, in the following, we show that $n-1$ directions are enough 
to approach the positive semidefinite cone.
First we discuss how the hyper feasible partition $\{A_1,\ldots , A_m \}$ of \textbf{FP} 
fits in the concept of tangent cone.
We recall that for $x \in K_n$ the cone of feasible directions is 
the set $\dirCone(x,K_n) = \{d \in \mathbb{S}_n \mid \exists t > 0 \text{ s.t } x +td \in K_n \}$.
Then the tangent cone at $x$ is the closure of $\dirCone (x,K_n)$ and is 
denoted by $\tanCone (x,K_n)$. It can be shown that if $d \in \tanCone (x,K_n)$ then $\lim _{t\to +\infty} \text{dist}(tx +d,K_n) = 0$.

We remark that if $x = \left(\begin{smallmatrix}	D & 0 \\ 0 & 0  \end{smallmatrix}\right)$, where 
$D$ is positive definite $k \times k$ matrix, then $\tanCone (x,K_n)$ consists of all 
symmetric matrices $\left(\begin{smallmatrix}	* & * \\ * & E  \end{smallmatrix}\right)$, where 
$*$ denotes arbitrary entries and $E$ is a positive semidefinite $(n-k)\times (n-k)$ matrix.
See \cite{pataki_handbook} for more details.

The output $\{A_1,\ldots , A_m \}$ of \textbf{FP} 
is such that $A_2 \in  \tanCone(A_1,K_{n} )$. This 
is clear from the shape of $A_1$ and $A_2$, and from a simple 
argument using the Schur Complement.  Now, $A_3$ is such 
that $\overline{\pi} _{k_1+k_2}(A_3)$ is positive semidefinite. 
We have 
$A_2 = \left(\begin{smallmatrix} * & * & * & *\\ *  & \widehat {A}_2 & 0 & 0 \\ * & 0 & 0  & 0 \\  *  & 0 & 0 & 0 \end{smallmatrix} \right)$
$A_3 = \left(\begin{smallmatrix} * & * & * & *\\ *  & * & * & * \\ * & * & \widehat {A}_3 & 0 \\  *  & * & 0 & 0 \end{smallmatrix} \right)$.
Then $   \left(\begin{smallmatrix}  * & * &  * \\ * & \widehat {A}_3 & 0  \\  *  & 0 & 0  \end{smallmatrix} \right) \in \tanCone \left( \left(\begin{smallmatrix}  \widehat {A}_2 & 0 &  0 \\ 0 & 0 & 0  \\  0  & 0 & 0  \end{smallmatrix} \right), K_{n-k_1} \right)$, 
i.e, $\overline{\pi} _{k_1}(A_3) \in  \tanCone(\overline{\pi} _{k_{1}}(A_{2}),K_{n-k_1} )$. Denote
$k_1 + \ldots + k_i$ by $N_i$ and set $N_0 = 0$. Then, for $i > 2$, we have:
\begin{align*}
\overline{\pi} _{N_{i-2}}(A_i) \in  \tanCone(\overline{\pi} _{N_{i-2}}(A_{i-1}),K_{n-N_{i-2}} ).
\end{align*}

Moreover, if the last subproblem $\overline{\pi}_{N_m}(K_n,L,c)$ has a feasible solution, 
we can pick some $c'$ such that $\overline{\pi}_{N_m}(c')$ is positive 
semidefinite. Then $\overline{\pi}_{N_{m-1}}(c') \in \tanCone(\overline{\pi} _{N_{m-1}}(A_{m}),K_{n-N_{m-1}} )$.
Given $\epsilon > 0$, by picking $\alpha _m > 0$ sufficiently large we have $\text{dist} (\overline{\pi}_{N_{m-1}}(c'+\alpha _m A_m ), K_{n-N_{m-1}}) < \epsilon $.
Now, $\overline{\pi}_{N_{m-2}}(x+\alpha _m A_m )$ does not necessarily lie on the 
tangent cone of $\overline{\pi}_{N_{m-2}}(A_{m-1})$ at $K_{n-N_{m-2}}$, but still 
it is possible to pick $\alpha _{m-1} > 0$ such that 
\begin{equation*}
\text{dist} (\overline{\pi}_{N_{m-2}}(c'+\alpha _m A_m +\alpha _{m-1} A_{m-1} ), K_{n-N_{m-2}}) < 2\epsilon.
\end{equation*}
In order to show this, let $h \in K_{n-N_{m-1}}$ be such that 
\begin{equation*}
\norm{\overline{\pi}_{N_{m-1}}(c'+\alpha _m A_m) -h} = \text{dist} (\overline{\pi}_{N_{m-1}}(c'+\alpha _m A_m ), K_{n-N_{m-1}}).
\end{equation*}
Now, define $\widetilde h$ to be the matrix $\overline{\pi}_{N_{m-2}}(c'+\alpha _m A_m)$, except 
that the lower right $(n-k_m)\times (n-k_m)$ block is replaced by $h$. It follows readily 
that $\widetilde h$ lies on the tangent cone of $\overline{\pi}_{N_{m-2}}(A_{m-1})$. Then, 
we may pick $\alpha _{m-1} > 0$ sufficiently large such that  $\text{dist} (\overline{\pi}_{N_{m-2}}(\alpha _m A_m) + h, K_{n-N_{m-2}}) < \epsilon $.
Let $y_1 = \overline{\pi}_{N_{m-2}}(c'+\alpha _m A_m)$, $y_2 =  \overline{\pi}_{N_{m-2}}(\alpha _{m-1} A_{m-1})$. 
We then have the following implications:
\begin{align*}
\text{dist} (y_1 + y_2, K_{n-N_{m-2}}) & \leq \text{dist} (y_1 - \widetilde h, K_{n-N_{m-2}}) + \text{dist} (y_2 +\widetilde h, K_{n-N_{m-2}}) \\
& \leq \norm{\overline{\pi}_{N_{m-1}}(c'+\alpha _m A_m) - h} + \epsilon  \leq 2\epsilon.
\end{align*}

If we continue in this way, it becomes clear that $\alpha _1, \ldots , \alpha _m$ can be selected 
such that $\text{dist} (c'+\alpha _m A_m +\alpha _{m-1} A_{m-1} + \ldots + \alpha _1 A_1, K_{n}) < m\epsilon $.
This shows how the directions $\{A_1,\ldots , A_m \}$ can be used to construct points that are 
arbitrarily close to $K_n$, when the last subproblem is feasible. This leads to the next theorem.

\begin{theorem}\label{theo_weakly_infeasible_subspace}
If $(K_{n},L,c)$ is weakly infeasible then there exists an   affine space of dimension at 
most $n-1$ such that  $L' + c' \subseteq L+c$ and $(K_{n},L',c')$ is weakly infeasible.	
\end{theorem}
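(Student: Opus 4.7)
The plan is to package the construction produced by the forward procedure \textbf{FP} together with the chained tangent-cone estimate that is spelled out in the paragraphs immediately preceding the theorem into a single clean statement.

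First I would apply \textbf{FP} to $(K_n,L,c)$ to obtain a nonsingular $P$, ranks $k_1,\ldots,k_m$ with $N_m:=k_1+\cdots+k_m$, and a maximal hyper feasible partition $\{A_1,\ldots,A_m\}\subseteq P^TLP$. Since the four feasibility statuses are invariant under the congruence $x\mapsto P^TxP$, it suffices to produce the desired affine subspace inside $P^TLP+P^TcP$ and then pull back using $P^{-T}(\cdot)P^{-1}$. Because $(K_n,L,c)$ is weakly infeasible, it is in weak status, so Proposition \ref{prop_last_problem} item $3$ gives that the last subproblem $\overline{\pi}_{N_m}(K_n,P^TLP,P^TcP)$ is weakly feasible. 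In particular there exists $c''\in P^TLP+P^TcP$ with $\overline{\pi}_{N_m}(c'')\in K_{n-N_m}$.

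Next I would set $L'':=\mathrm{span}\{A_1,\ldots,A_m\}$ and claim that $(K_n,L'',c'')$ is weakly infeasible. Infeasibility is immediate since $L''+c''\subseteq P^TLP+P^TcP$ and the latter is infeasible by congruence invariance. For the distance-zero property I would simply formalize the recursion carried out in the text preceding the theorem: given $\epsilon>0$, one picks $\alpha_m,\alpha_{m-1},\ldots,\alpha_1>0$ in that order so that, at each step, the partial sum $c''+\sum_{j\ge i}\alpha_j A_j$ satisfies $\mathrm{dist}(\overline{\pi}_{N_{i-1}}(\cdot),K_{n-N_{i-1}})<(m-i+1)\epsilon$. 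Taking $i=1$ and then $\epsilon\to 0$ yields $\mathrm{dist}(L''+c'',K_n)=0$.

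Finally I would bound the dimension. Each $A_i$ is nonzero so $k_i\ge 1$, giving $m\le N_m$. If $N_m=n$ then $\overline{\pi}_{N_m}$ is the trivial map onto $\mathbb{S}_0=\{0\}$; the associated SDFP would be trivially strongly feasible, contradicting the weak feasibility of the last subproblem. Hence $N_m\le n-1$, so $\dim L''\le m\le n-1$. Pulling back, $L':=P^{-T}L''P^{-1}$ and $c':=P^{-T}c''P^{-1}$ give an affine subspace $L'+c'\subseteq L+c$ of dimension at most $n-1$ such that $(K_n,L',c')$ is weakly infeasible.

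The main obstacle is the rigorous inductive distance estimate, because the partial sums $c''+\sum_{j>i}\alpha_j A_j$ are themselves only close to $K_n$ rather than inside it, so one cannot literally invoke membership in the tangent cone at each step. The remedy, which is already carried out in the paragraph before the theorem, is to carry along the nearest point $h\in K_{n-N_{i-1}}$ realizing the distance, construct an auxiliary $\widetilde h$ whose lower-right block equals $h$ so that $\widetilde h\in\tanCone(\overline{\pi}_{N_{i-1}}(A_i),K_{n-N_{i-1}})$, and apply the triangle inequality to see that choosing $\alpha_i$ large adds at most one extra $\epsilon$ to the total error. Once this bookkeeping is in place, the rest of the proof is essentially a one-line packaging of Proposition \ref{prop_last_problem} and \textbf{FP}.
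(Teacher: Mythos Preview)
Your proposal is correct and follows essentially the same approach as the paper: both take $L'$ (after congruence) to be the span of the hyper feasible partition $\{A_1,\ldots,A_m\}$ produced by \textbf{FP}, choose $c'$ so that the last subproblem is feasible, invoke the chained tangent-cone estimate for the distance-zero property, and bound $m\le n-1$ by observing that $N_m=n$ would make the last subproblem the trivial strongly feasible system $(\{0\},\{0\},0)$. The paper's proof is terser---it omits the explicit congruence pullback and the infeasibility of $(K_n,L',c')$ via the subset relation---but the content is identical.
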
 
\begin{proof}
The construction above shows that if $L'$ is the space spanned by $\{A_1, \ldots , A_m \}$ 
and $c'$ is taken as above, then $(K_n,L',c')$ is weakly infeasible. As $(K_n,L,c)$ 
is weakly infeasible, we have $m > 0$. We also have $k_1 + \ldots + k_m \leq n$, which 
implies $m \leq n$. Notice that $\overline{\pi} _{n}(K_n,P^TLP,P^TcP)$ is strongly feasible, because 
it is equal to the system $(\{0\}, \{0\},0)$. Therefore $k_1 + \ldots + k_m < n$, which 
forces $m < n$.
\end{proof}

\section{Backward Procedure}\label{sec:backward_procedure}
In this section, we discuss a ``backward procedure'' for distinguishing the 4 different 
feasibility statuses. The main difficulty is when the problem is in 
weak status. In that case, due to Proposition \ref{prop_last_problem}, the last subproblem is  
weakly feasible. This offers the opportunity to shrink both the last subproblem 
and the whole problem, as discussed in our next theorem.

\begin{theorem}\label{theo_backwards}
Let $(K_n,L,c)$ be a given SDFP, satisfying the following assumptions:
\begin{enumerate}
	\item for some $k > 0$, $\overline \pi _{ k}(K_n,L,c)$ is weakly feasible.
	\item for some $l$ such that $0 \leq l < n-k$, the face 	$F = \left\{ \left(\begin{smallmatrix}	A & 0 \\0 & 0 \end{smallmatrix}\right) \mid A \in K_{l} \right\}$ contains the feasible region of $\overline \pi _{k}(K_n,L,c)$.
\end{enumerate}
\begin{equation*}
\left(\begin{array}{c| c }
* &  * \\\hline
* & \overline \pi_k(L+c) \\
\multicolumn{1}{c}{$\upbracefill$} & \multicolumn{1}{c}{$\upbracefill$}   \\
\multicolumn{1}{c}{\scriptstyle k} & \multicolumn{1}{c}{\scriptstyle n-k}  \\

\noalign{\vspace{-2\normalbaselineskip}}
  \end{array} \right)
  =
\left(\begin{array}{c| c c}
* & *  & * \\\hline
* & A  & 0   \\
* & 0 & 0 \\
\multicolumn{1}{c}{$\upbracefill$} & \multicolumn{1}{c}{$\upbracefill$}  & \multicolumn{1}{c}{$\upbracefill$}  \\
\multicolumn{1}{c}{\scriptstyle k} & \multicolumn{1}{c}{\scriptstyle l}  & \multicolumn{1}{c}{\scriptstyle n-k-l}  \\

\noalign{\vspace{-2\normalbaselineskip}}
  \end{array} \right)
  \vspace{2\normalbaselineskip},
\end{equation*}

Furthermore, let $E$ be the set of $(k+l)\times(k+l)$ upper left principal submatrices in $\mathbb{S}_n$, i.e., $E = \left\{ \left(\begin{smallmatrix}	B & 0 \\0 & 0 \end{smallmatrix}\right) \mid B \in \mathbb{S}_{k+l} \right\}$,
and define $\widetilde L$ and $\widetilde c$ as the vector subspace and a vector such that $E\cap (L+c)$ is 
equal to 
$\widetilde L + \widetilde c$, i.e.,  
\begin{equation}
E\cap (L + c) = \widetilde{L} + \widetilde{c} = 
\left(\begin{array}{c c| c}
\multicolumn{2}{c|}{\pi_{k+l}(\widetilde{L} + \widetilde{c})} & 0 \\
  &   & 0   \\\hline
  0 & 0 & 0 \\
\multicolumn{1}{c}{$\upbracefill$} & \multicolumn{1}{c}{$\upbracefill$}  & \multicolumn{1}{c}{$\upbracefill$}  \\
\multicolumn{1}{c}{\scriptstyle k} & \multicolumn{1}{c}{\scriptstyle l}  & \multicolumn{1}{c}{\scriptstyle n-k-l}  \\
\noalign{\vspace{-2\normalbaselineskip}}
  \end{array} \right)
  \vspace{2\normalbaselineskip}. \label{aaa}
\end{equation}
($\widetilde L+\widetilde c$ is an affine subspace of $L + c$ where all but the upper left block is set to zero.)

Then, the following holds: If \eqref{aaa} is empty, $(K_n,L,c)$ is infeasible. Otherwise, $(K_n,L,c)$ is feasible if and  only if the SDFP subproblem 
$\pi_{k+l}(K_n,\widetilde{L},\widetilde{c})$ is feasible.
\end{theorem}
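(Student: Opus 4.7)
The plan is to establish that, under assumption 2, \emph{every} feasible point of $(K_n,L,c)$ must in fact lie in the affine slice $E\cap(L+c)$. Once this is shown, the theorem reduces to inspecting the upper-left $(k+l)\times(k+l)$ block of such points, which is exactly what $\pi_{k+l}(K_n,\widetilde L,\widetilde c)$ captures.

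The key lemma I would use is a standard property of positive semidefinite matrices: if $x\in K_n$ has a zero diagonal block, then the entire rows and columns indexed by that block vanish. Concretely, writing $x=\bigl(\begin{smallmatrix} M_{11} & M_{12}\\ M_{12}^T & M_{22}\end{smallmatrix}\bigr)$ with $M_{22}=0$, any vector supported on the second block yields $v^Txv=0$, and positive semidefiniteness forces $xv=0$, so $M_{12}=0$ as well. I would state this up front and use it repeatedly.

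With this in hand, suppose $x\in(L+c)\cap K_n$. Then $\overline\pi_k(x)\in\overline\pi_k(L+c)\cap K_{n-k}$, which by assumption 2 lies in the face $F$, i.e.\ the bottom-right $(n-k-l)\times(n-k-l)$ block of $\overline\pi_k(x)$ is zero. Since that block coincides with the bottom-right $(n-k-l)\times(n-k-l)$ block of $x$ itself, the lemma above applied to $x\in K_n$ forces the last $n-k-l$ rows and columns of $x$ to be identically zero. Thus $x\in E$, hence $x\in E\cap(L+c)=\widetilde L+\widetilde c$. This immediately gives the first assertion: if $E\cap(L+c)=\emptyset$ then no such $x$ can exist, so $(K_n,L,c)$ is infeasible.

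For the equivalence, the forward direction is now immediate: any feasible $x$ lies in $\widetilde L+\widetilde c$ and has the block form $\bigl(\begin{smallmatrix} \pi_{k+l}(x) & 0\\ 0 & 0\end{smallmatrix}\bigr)$, and since the principal submatrix of a PSD matrix is PSD, $\pi_{k+l}(x)$ witnesses feasibility of $\pi_{k+l}(K_n,\widetilde L,\widetilde c)$. For the converse, given $y\in\pi_{k+l}(\widetilde L+\widetilde c)\cap K_{k+l}$, the zero-padded matrix $\bigl(\begin{smallmatrix} y & 0\\ 0 & 0\end{smallmatrix}\bigr)$ lies in $\widetilde L+\widetilde c\subseteq L+c$ and is in $K_n$. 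The only nontrivial ingredient is the PSD block lemma; assumption 1 (weak feasibility of $\overline\pi_k(K_n,L,c)$) is not explicitly invoked here, but it is precisely the situation that guarantees assumption 2 is natural (the feasible region sits in a proper face), so I would flag that assumption 1 is really contextual while assumption 2 does all the work.
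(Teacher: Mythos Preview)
Your proof is correct and follows essentially the same route as the paper: both arguments show that any feasible $x\in(L+c)\cap K_n$ must have $\overline\pi_k(x)\in F$ by assumption~2, and then use the standard fact that a zero diagonal block in a PSD matrix forces the corresponding off-diagonal rows and columns to vanish, so $x\in E$. Your version is slightly more explicit (you state the PSD block lemma and spell out the zero-padding converse), and your observation that assumption~1 is only contextual---not logically required for the argument---is accurate; the paper's proof invokes it in its opening sentence but does not actually use it either.
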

\begin{proof}
Due to assumption 
$1$, $\overline \pi _{k}(K_n,L,c)$ is weakly feasible, so if $x \in L+c$ and 
$\overline \pi _{k}(x) $ is positive semidefinite then $x$ has the format 
$ \left(\begin{smallmatrix}	* & * \\ * & C \end{smallmatrix}\right) $, where 
$C$ is a $(n-k)\times (n-k)$ positive semidefinite definite and $*$ denotes 
arbitrary entries. Now, due to Assumption $2$, $C$ itself has the format 
$\left(\begin{smallmatrix}	A & 0 \\0 & 0 \end{smallmatrix}\right)$, where 
$C$ is a $l\times l$ positive semidefinite matrix . So, actually, $x$ has the format 
$ \left(\begin{smallmatrix}	* & * & * \\ * &  C  & 0 \\ * & 0 & 0 \end{smallmatrix}\right) $.

In order for $x$ itself to be positive semidefinite, the elements in the upper 
right and lower left must be 0. In other words, $x$ must  have 
the format  $ \left(\begin{smallmatrix}	* & * & 0 \\ * &  C  & 0 \\ 0 & 0 & 0 \end{smallmatrix}\right) $, 
where  $ \left(\begin{smallmatrix}	* & *  \\ * & C \end{smallmatrix}\right) $ is 
a $(k+l)\times (k+l)$ positive semidefinite matrix. 
Therefore, if $E \cap (L+c) = \emptyset$ there is no  way $(K_n,L,c)$ could be feasible.

If $E \cap (L+c)$ is not empty, since $\pi _{k+l}(x) =  \left(\begin{smallmatrix}	* & *  \\ * & C \end{smallmatrix}\right) $, 
it is clear that the feasibility of $(K_n,L,c)$ is equivalent to the feasibility 
of $\pi _{k+l}(K_n,\widetilde L,\widetilde c)$.
\end{proof}

We remark that whenever $(K_n,L,c)$ satisfies Assumption $1$ , 
it is possible to apply a congruence transformation to $(K_n,L,c)$ in order to meet 
Assumption $2$. Using Theorem 11.3 of \cite{rockafellar}, the weak feasibility of $\overline \pi _{k}(K_n,L,c)$ 
implies the existence of $w \neq 0$ such that $\inProd{w}{ \overline \pi _{k}(l+c)} \leq \inProd{w}{x}$, 
for every $l \in L$ and $x \in K_{n-k}$. The only way this inequality can hold is 
is if $w \in K_{n-k}\cap \overline \pi _{k}(L)^\perp $ and $\inProd{w}{ \overline \pi _{k}(c)}\leq 0 $. As 
$\overline \pi _{k}(K_n,L,c)$ is not strongly infeasible, we have $\inProd{w}{ \overline \pi _{k}(c)}= 0  $.
Changing $L+c$ and $w$ by a congruence transformation if necessary, we may
assume that $w =  \left(\begin{smallmatrix}	0 & 0 \\ 0 & \widetilde w\end{smallmatrix} \right) $, 
where $ \widetilde w$ is a $(n-k-l)\times (n-k-l)$ positive definite matrix and 
$l < n-k$.
Then, it is clear that Assumption $2$ holds for the transformed problem.

The search for a $w$ as above is essentially one step of a facial 
reduction algorithm \cite{borwein_facial_1981,pataki_strong_2013,article_waki_muramatsu}. Each iteration of a facial reduction algorithm  
aims to find a proper face of $K_n$ that still 
contains the feasible region. Usually, however, the search is done on
the whole problem. 
Let 
$w' =  \left(\begin{smallmatrix}	0 & 0 \\ 0 & w \end{smallmatrix} \right) $, where $w' \in K_n$, 
then $w' \in K_{n}\cap L^\perp \cap \{c \}^\perp$. Which means that $w'$ can be used 
to perform a step of facial reduction on the whole problem. In particular, 
if $x$ is a feasible point, since $\inProd{x}{w} = 0$, it must be true 
that $x = \left(\begin{smallmatrix}	D & 0 \\ 0 & 0 \end{smallmatrix} \right)$, where $D$ 
is a positive semidefinite $(k+l)\times (k+l)$ matrix.
The idea is that the knowledge of a weakly feasible subproblem makes 
it possible to confine the search to a smaller subproblem and still find a
smaller face of $K_n$ that contains the feasible region of the original problem.

If we apply 
Theorems \ref{theo_decomp} and \ref{theo_backwards} repeatedly, we obtain
a facial reduction-like procedure  which is able to determine the  feasibility 
status of a given $(K_n,L,c)$ as shown below.

\noindent
{\bf [Procedure BP]}
\begin{enumerate}[Step 1.]
\item Apply \textbf{FP} to $(K_n,L,c)$.
If the last subproblem $\overline {\pi} _{k_1 + \ldots + k_m}(K_n,P^TLP,P^TcP)$ is strongly infeasible, then 
$(K_n,L,c)$ is also strongly infeasible.
If $\overline {\pi} _{k_1 + \ldots + k_m}(K_n,P^TLP,P^TcP)$  is strongly feasible, then 
$(K_n,L,c)$ is also strongly feasible.
In both cases we stop the procedure.
Otherwise set $i = 0,F_0 = K_n, L_0 = L, c_0 = c$.

\item If we reach this step, $\overline {\pi} _{k_1 + \ldots + k_m}(F_i,P^TL_iP,P^Tc_iP)$, is \emph{weakly feasible}, i.e., 
$(F_i,L_i,c_i)$ is in weak status.
Applying a congruence transformation to $(F_i,P^TL_iP,P^Tc_iP)$, if necessary, both 
assumptions of Theorem \ref{theo_backwards} can be met. Let $K_{k+l} $, $\widetilde L$,
$\widetilde c$ and $E$ be as in Theorem \ref{theo_backwards}.
If $E \cap P^T(L_i+c_i)P$ is empty, we stop and 
declare $(K_n,L,c)$ to be weakly infeasible. Otherwise, we obtain 
$\widetilde L+ \widetilde  c$ such that $E \cap P^T(L_i+c_i)P = \widetilde L+\widetilde c$ and a projection $\pi _{k+l}$.

\item Apply \textbf{FP} to $\pi _{k+l}(F_i,\widetilde L,\widetilde c)$ and obtain a 
new projection $\overline {\pi} _{k_1 + \ldots + k_m}$. 
If $\overline {\pi} _{k_1 + \ldots + k_m}(K_{k+l},\pi _{k+l}(\widetilde L),\pi _{k+l}(\widetilde c))$ is strongly feasible, then $(K_n,L,c)$ is weakly feasible. 
If $\overline {\pi} _{k_1 + \ldots + k_m}(K_{k+l},\pi _{k+l}(\widetilde L),\pi _{k+l}(\widetilde c))$ is strongly infeasible, then $(K_n,L,c)$ is weakly infeasible.
In both cases, we end the procedure. Otherwise, 
set $F_{i+1} := K_{k+l}$, $L_{i+1} := \pi _{k+l}(\widetilde L)$, $c_{i+1} := \pi _{k+l}(\widetilde c)$, $i := i+1$ and 
return to Step $2.$
\end{enumerate}

\noindent{\bf Remark:}  The procedure terminates in at most $n$  iterations, because the size of the problem is reduced at least by one for each 
iteration.

\begin{example}\label{example_bp}
Let $L$ and $c$ be as in Example \ref{example_partition} and let us apply 
\textbf{BP} to $(K_4,L,c)$. At Step 1, we apply 
to \textbf{FP} and we obtain $\overline{\pi}_2(P^T(L+c)P) = 
\left \{ 
	\left(\begin{smallmatrix}	
		z +2 & z + 1 \\
		z + 1 & 0
	\end{smallmatrix}\right)
	 \mid z \in \mathbb{R} \right \}$. And $\overline{\pi}_2(K_4,P^TLP,P^TcP)$ is 
	 weakly feasible, so we move on to Step 2. The  feasible region 
	 of $\overline{\pi}_2(K_4,P^TLP,P^TcP)$ consists of a single matrix which 
	 is $\left(\begin{smallmatrix}	
		1 & 0 \\
		0 & 0
	\end{smallmatrix}\right)$. We are under the conditions of Theorem 
	$\ref{theo_backwards}$ and 
	\begin{equation*}
	E \cap P^T(L+c)P = \left \{ \begin{pmatrix} t & 1 & v & 0 \\ 1 & -1 & v+1 & 0 \\ v & v+1 & 1 & 0 \\ 0 & 0 & 0 & 0 \end{pmatrix}  \mid t,v \in \mathbb{R} \right \}.
	\end{equation*}
Then $\pi _3 (\widetilde{L} + \widetilde{c}) =  \left \{ \left(\begin{smallmatrix}	 t & 1 & v  \\ 1 & -1 & v+1  \\ v & v+1 & 1 \end{smallmatrix}\right) \mid t,v \in \mathbb{R} \right \}$.
Applying \textbf{FP} to $ (K_3,\pi _M(\widetilde{L}),\pi _M(\widetilde{c}))$ we obtain as output 
$P = I_3 $, $m = 1$, $k_1 = 1$ and $A_1 = \left(\begin{smallmatrix}	1 & 0 & 0 \\ 0 & 0 & 0 \\ 0 & 0 & 0 \end{smallmatrix}\right)$. Then 
$\overline {\pi} _1(\pi _3 (\widetilde{L} + \widetilde{c})  )  =  \left \{ \left(\begin{smallmatrix}	 -1 & v+1  \\  v+1  & 1 \end{smallmatrix}\right) \mid v \in \mathbb{R} \right \}$.
The $-1$ in the upper left entry ensures that the system  $(K_2,\overline {\pi} _1(\pi _3 (\widetilde{L})),\overline {\pi} _1(\pi _3 (\widetilde{L})))   $ is strongly infeasible, which 
shows that $(K_4,L,c)$ is weakly infeasible itself.

\end{example}

\subsection{Complexity aspects of \textbf{BP} }
Let us discuss briefly  certificates and complexity issues regarding \textbf{BP}.
Both \textbf{BP} and \textbf{FP} can be thought as procedures that invoke several oracles. 
For instance, we can consider that a nonzero hyper feasible direction, as required in \textbf{FP}, is 
obtained by querying an oracle.
According to the recipe explained in the second paragraph of Section 2.1, we can show that
the  problem of deciding the feasibility status  of $(K_n, L, c)$ has a finite certificate and 
that \textbf{BP} acts as as verifier procedure.  All we have to do is argue that 
all the computations required by \textbf{BP} can be checked in polynomial time.

First note that all computations done by \textbf{FP} can be checked 
either by the certificates discussed in Proposition \ref{prop_certificate} or 
by Gordan's Theorem. The same is true for 
Steps 1 and 3 of \textbf{BP}. The only part of \textbf{BP} that needs further analysis 
is when Theorem \ref{theo_backwards} is invoked at Step 2, where
we need to check that assumption 2 of Theorem \ref{theo_backwards} holds. 
However, we can use as certificate a nonzero element $w$ satisfying $w \in F_i \cap (P^TL_iP)^\perp \cap \{P^Tc_iP \}^\perp$, as in the discussion 
that follows the proof of Theorem \ref{theo_backwards} and, if necessary, a non-singular
matrix which puts the problem in the correct shape.

This provides an alternative proof of the fact 
that for each different feasibility status, the problem of deciding 
whether $(K_n, L, c)$ has that status is in  $\NP\cap \coNP$ in the BBS model of 
real computation.

\section{Conclusion}\label{sec:conc}
In this article we presented an analysis of weakly infeasible problems via 
two procedures: \textbf{FP} and \textbf{BP}. The procedure \textbf{FP} produces 
as an output a finite set of directions and for weakly infeasible problem, 
they can be used to construct $L+c$ arbitrarily close to $K_n$. 
The procedure \textbf{BP} uses 
\textbf{FP} and is able to distinguish between the four feasibility statuses. The computations
involved in both procedures might be hard, but they are verifiable in polynomial time, in the 
BSS model.
Extension of our analysis to blockwise SDPs and to other classes of conic linear programs is an 
interesting topic for future research.

\section*{Acknowledgments.}
M. Muramatsu and T. Tsuchiya are supported in part with Grant-in-Aid
for Scientific Research (B)24310112 and (C)2630025.
M. Muramatsu is also supported in part with Grant-in-Aid for Scientific Research (B) 26280005.

\bibliographystyle{plain}
\bibliography{bib}
\end{document}